\def\th@plain{%
  \itshape 
}
\renewenvironment{proof}[1][\proofname]{\par
  \pushQED{\qed}%
  \normalfont \topsep6\p@\@plus6\p@\relax
  \trivlist
  \item[\hskip\labelsep
        \bfseries
    #1\@addpunct{.}]\ignorespaces
}{%
  \popQED\endtrivlist\@endpefalse
}
\newtheorem{theorem}{Theorem}[section]
\newtheorem{observation}[theorem]{Observation}
\numberwithin{equation}{section}
\newtheorem{thm}{Theorem}[section]
\newtheorem{cor}[thm]{Corollary}
\newtheorem{lem}[thm]{Lemma}
\newtheorem{prop}[thm]{Proposition}
\newtheorem{conj}[thm]{Conjecture}
\numberwithin{equation}{section}
\newcommand{\etal}{et~al.\ }
\newcommand{\chii}[1]{\chi_\mathcal{I}(#1)}
\numberwithin{equation}{section}
\newcommand{\D}{\mathrm{deg}}
\newcommand{\dist}{\mathrm{dist}}
\begin{document}

\title{\LARGE Hypergraph incidence coloring
\thanks{This work was supported by the National Natural Science Foundation of China (11631014) and the National Key Research \& Development Program of China (2017YFC0908405). }}

\author{Weichan Liu$^{1,2}$  \quad Guiying Yan$^{1,2}$\\
{\small  1. Academy of Mathematics and Systems Science, Chinese Academy of Sciences, Beijing, China}\\
{\small  2. University of Chinese Academy of Sciences, Beijing, China}\\
{\small  emails: wcliu@amss.ac.cn, yangy@amt.ac.cn}}
 \date{\today}

\maketitle

\begin{abstract}\baselineskip 0.60cm
An incidence of a hypergraph $\mathcal{H}=(X,S)$ is a pair $(x,s)$ with $x\in X$, $s\in S$ and $x\in s$. Two incidences $(x,s)$ and $(x',s')$ are adjacent if (i) $x=x'$, or (ii) $\{x,x'\}\subseteq s$ or $\{x,x'\}\subseteq s'$. 
A proper incidence $k$-coloring of a hypergraph $\mathcal{H}$ is a mapping $\varphi$ from 
the set of incidences of $\mathcal{H}$ to $\{1,2,\ldots,k\}$
so that $\varphi(x,s)\neq \varphi(x',s')$ for any two adjacent incidences $(x,s)$ and $(x',s')$ of $\mathcal{H}$.
The incidence chromatic number $\chi_I(\mathcal{H})$ of $\mathcal{H}$ is the minimum integer $k$ such that  $\mathcal{H}$ has a proper incidence $k$-coloring. In this paper we prove $\chi_I(\mathcal{H})\leq (4/3+o(1))r(\mathcal{H})\Delta(\mathcal{H})$ for every $t$-quasi-linear hypergraph with $t<<r(\mathcal{H})$ and sufficiently large 
$\Delta(\mathcal{H})$, where $r(\mathcal{H})$ is the maximum of the cardinalities of the edges in $\mathcal{H}$.
It is also proved that $\chi_I(\mathcal{H})\leq \Delta(\mathcal{H})+r(\mathcal{H})-1$ if  $\mathcal{H}$ is an $\alpha$-acyclic linear hypergraph, and this bound is sharp.

\noindent \textbf{\textit{Keywords:}} \emph{incidence coloring; strong edge coloring; linear hypergraph; $\alpha$-acyclic hypergraph.}
\end{abstract}

\baselineskip 0.60cm

\section{Introduction}

Let $\mathcal{H}$ be a hypergraph $(X,S)$, where $X$ is a vertex set and $S$ is an edge set which is a family of non-empty subsets of $X$. A hypergraph $\mathcal{H}'=(X',S')$ is a \textit{subhypergraph} of $\mathcal{H}$, written as $\mathcal{H}'\subseteq \mathcal{H}$, if $X'\subseteq X$ and $S'\subseteq S$. A hypergraph $(X',S')$ is an \textit{induced subhypergraph} of $\mathcal{H}$ on a set $Y\subseteq X$ of vertices, denoted by $\mathcal{H}[Y]$, if
$X'=Y$ and $S'=\{e~|~e\cap Y\neq \emptyset, e\in S\}$.

An \textit{$(x_1,x_k)$-path} of $\mathcal{H}$ is a sequence $(x_1,s_1,\ldots,x_{k-1},s_{k-1},x_k)$ 
with distinct vertices $x_1,\ldots,x_k$ and distinct edges $s_1,\ldots,s_{k-1}$ such that 
$\{x_i,x_{i+1}\}\subseteq  s_i$ for each $1\leq i\leq k-1$. 
Now 
$\mathcal{H}$ is \textit{connected} if there exists an $(x,x')$-path
for every two distinct vertices $x,x'\in X$, is \textit{$k$-uniform} if $|s|=k$ for every edge $s\in S$, and is \textit{linear} if $|s\cap s'|\leq 1$ for every two distinct edges $s,s'\in S$. 
We denote by $\D_{\mathcal{H}}(x)=|\{s~|~x\in s\in S\}|$ the \textit{degree} of $x$ in $\mathcal{H}$.  
Now $\mathcal{H}$ is \textit{$d$-regular} if $\D_{\mathcal{H}}(x)=d$ for every vertex $x\in X$. 
Let $\Delta(\mathcal{H})=\max\{\D_{\mathcal{H}}(x)~|~x\in X\}$, $\delta(\mathcal{H})=\min\{\D_{\mathcal{H}}(x)~|~x\in X\}$, and $r(\mathcal{H})=\max\{|s|~|~s\in S\}$. 

We call $(x,s)$ an \textit{incidence} of $\mathcal{H}$ if $x\in X$, $s\in S$ and $x\in s$. Let $I(\mathcal{H})$ be the set of incidences of $\mathcal{H}$.
Two incidences $(x,s)$ and $(x',s')$ are \textit{adjacent} if (i) $x=x'$, or (ii) $\{x,x'\}\subseteq s$ or $\{x,x'\}\subseteq s'$.

A \textit{proper incidence $k$-coloring} of $\mathcal{H}$ is a mapping $\varphi$: $I(\mathcal{H})\rightarrow \{1,2,\ldots,k\}$ so that $\varphi(x,s)\neq \varphi(x',s')$ for any two adjacent incidences $(x,s)$ and $(x',s')$ of $\mathcal{H}$. The \textit{incidence chromatic number} $\chi_I(\mathcal{H})$ of $\mathcal{H}$ is
the minimum integer $k$ such that $\mathcal{H}$ has a proper incidence $k$-coloring.
This notion generalizes the incidence chromatic number of graphs, which was introduced in 1993 by Brualdi and Quinn Massey \cite{Brualdi199351}.

In the literature, many topics concerning the incidence coloring of graphs were investigated, 
including 
incidence coloring of certain graph classes \cite{MAYDANSKIY2005131,Gregor2017174,Bonamy201729,Kardos2020345,Gregor201693,Bi20211,Dolama2004121,Dolama2005,Bonamy2014190,Shiu20086575}, incidence choosability \cite{Benmedjdoub201940}, interval incidence coloring \cite{Janczewski2014131}, fractional incidence coloring \cite{Yang}, incidence coloring game \cite{Andres20091980}, digraph incidence coloring \cite{Duffy2019191}, the complexity of the incidence coloring \cite{Li20081334}, and the application of incidence coloring to multi-frequency assignment problems \cite{Bi20211}. To our knowledge, there is no publication concerning the incidence coloring of hypergraphs.

In a graph $G=(V,E)$, the \textit{distance} of two vertices $u,v\in V$ in $G$ is the length (the number of edges) of the shortest path between $u$ and $v$, denoted by  $\dist_G(u,v)$.
The \textit{square} of $G$ is a graph $G^2$ with vertex set $V(G^2)=V$ and edge set $E(G^2)=E\cup \{uv~|~\dist_G(u,v)=2\}$.
The \textit{line graph} of $G$ is a graph $L(G)$ 
with vertex set $V(L(G))=\{v_e~|~e\in E\}$ and edge set $E(L(G))=\{v_ev_{e'}~|~\emptyset\neq e\cap e'\subseteq E,~e\neq e'\}$; then let $\dist_{G}(e,e')=\dist_{L(G)}(v_e,v_{e'})$ for two edges $e,e'\in E$.
A \textit{strong edge-$k$-coloring} of $G$ is a mapping $\phi:E(G)\rightarrow \{1,2,\ldots,k\}$ so that $\phi(e_1)\neq \phi(e_2)$ if $\dist_G(e_1,e_2)\leq 2$. The \textit{strong chromatic index} $\chi'_s(G)$ of $G$ is 
the minimum integer $k$ such that $G$ has a strong edge $k$-coloring.
Since a strong edge $k$-coloring of $G$ can be easily translated into a proper $k$-coloring of $L(G)^2$ and vise versa, 
  $\chi'_s(G)=\chi(L(G)^2)$
for every graph $G$.

The \textit{Levi graph} of $\mathcal{H}$ is a bipartite graph $B(\mathcal{H})=(V_1,V_2; E)$ where $V_1=X,V_2=S$ and $E=\{xs~|~x\in X,s\in S,x\in s\}$.
According to the definitions of Levi graph, proper incidence coloring, and strong edge coloring, the following is straightforward.

\begin{observation}\label{lem:bijection}
A hypergraph $\mathcal{H}$ has a proper incidence $k$-coloring if and only if 
$\mathcal{B}(\mathcal{H})$ has a strong edge $k$-coloring.
\end{observation}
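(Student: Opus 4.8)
The plan is to set up the evident correspondence between incidences of $\mathcal{H}$ and edges of its Levi graph, and then to check that this correspondence carries the incidence-adjacency relation onto the relation of being at distance at most two in $B(\mathcal{H})$. Concretely, the edges of $B(\mathcal{H})$ are exactly the pairs $xs$ with $x\in X$, $s\in S$ and $x\in s$, so $(x,s)\mapsto xs$ is a bijection from $I(\mathcal{H})$ onto $E(B(\mathcal{H}))$. It therefore suffices to prove that two incidences $(x,s)$ and $(x',s')$ are adjacent if and only if $\dist_{B(\mathcal{H})}(xs,x's')\le 2$; once this is done, a proper incidence $k$-coloring of $\mathcal{H}$ is literally the same object as a strong edge $k$-coloring of $B(\mathcal{H})$.

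For the forward direction I would split on the two clauses in the definition of adjacency. If $x=x'$, the edges $xs$ and $x's'$ of $B(\mathcal{H})$ share the vertex $x$, so they are at distance at most $1$. If $\{x,x'\}\subseteq s$, then $x'\in s$, so $x's$ is an edge of $B(\mathcal{H})$; it meets $xs$ at $s$ and meets $x's'$ at $x'$, so $\dist_{B(\mathcal{H})}(xs,x's')\le 2$. The case $\{x,x'\}\subseteq s'$ is symmetric, via the edge $xs'$.

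For the converse I would use that $B(\mathcal{H})$ is bipartite with parts $X$ and $S$, so that every edge has exactly one endpoint in each part. If $\dist_{B(\mathcal{H})}(xs,x's')=0$ then $x=x'$; if it equals $1$, the edges $xs$ and $x's'$ share an endpoint, so $x=x'$ or $s=s'$; if it equals $2$, there is an edge $yt$ of $B(\mathcal{H})$ (so $y\in X$, $t\in S$, $y\in t$) that meets both $\{x,s\}$ and $\{x',s'\}$, and inspecting the few ways this can occur shows that one of $x=x'$, $s=s'$, $x\in s'$, $x'\in s$ must hold. Each of these four conditions makes $(x,s)$ and $(x',s')$ adjacent: the first by clause (i); the others by clause (ii), since in each case $x$ and $x'$ then lie in a common edge. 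Hence the two relations coincide, and the equivalence of the two types of colorings follows.

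As the authors remark, nothing deep is involved; the only point needing a little care is the distance-$2$ case of the converse, where bipartiteness of $B(\mathcal{H})$ is used to guarantee that the intermediate edge $yt$ is a legitimate incidence distinct from $xs$ and $x's'$, and where one must observe that the two ``mixed'' possibilities for $yt$ are exactly the situations described by the clause $\{x,x'\}\subseteq s$ or $\{x,x'\}\subseteq s'$.
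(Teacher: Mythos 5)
Your proposal is correct and follows essentially the same route as the paper: both identify incidences of $\mathcal{H}$ with edges of $\mathcal{B}(\mathcal{H})$ via $(x,s)\mapsto xs$ and then verify that incidence-adjacency coincides with the edges being at distance at most $2$, with the same case analysis (your four outcomes $x=x'$, $s=s'$, $x\in s'$, $x'\in s$ match the paper's cases (a)--(c)). Your treatment of the distance-$2$ case via bipartiteness is slightly more explicit than the paper's, but the argument is the same.
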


To see this, we let $\varphi$ be an incidence coloring of $\mathcal{H}=(X,S)$ and let $\phi$ be an edge coloring of $\mathcal{B}(\mathcal{H})$ 
such that $\varphi(x,s)=\phi(xs)$ for every $x\in X$ and $s\in S$ with $x\in s$. Note that $(x,s)$ is an incidence of $\mathcal{H}$ and $xs$ is an edge of $\mathcal{B}(\mathcal{H})$.
Now $\phi$ is a strong edge coloring if and only if 
$\varphi(x_1,s_1)=\phi(x_1s_1)\neq \phi(x_2s_2)=\varphi(x_2,s_2)$ ($x_i\in X$, $s_i\in S$, $i=1,2$) whenever 
$1\leq \dist_{\mathcal{B}(\mathcal{H})}(x_1s_1,x_2s_2)\leq 2$, which is equivalent to say that one of the following holds: ($a$) $x_1=x_2$ and $s_1\neq s_2$, ($b$) $x_1\not=x_2$ and $s_1= s_2$, or ($c$) $x_1\not=x_2$, $s_1\not= s_2$ and either $x_1s_2$ or $x_2s_1$ is an edge of $\mathcal{B}(\mathcal{H})$, and thus that $(x_1,s_1)$ and $(x_2,s_2)$ are two adjacent incidences of $\mathcal{H}$. Hence $\phi$ is a strong edge coloring if and only if $\varphi$ is a proper incidence coloring.

Observation \ref{lem:bijection} immediately implies 
\begin{align}\label{eq:relationship}
  \chii{\mathcal{H}}=\chi'_s(\mathcal{B}(\mathcal{H}))=\chi(L(\mathcal{B}(\mathcal{H}))^2).  
\end{align}

We combine the first equality of \eqref{eq:relationship} with  known results on the strong chromatic index of graphs
to estimate upper bounds for $\chii{\mathcal{H}}$.
This story began in 1985, in which year Erd\H{o}s and Ne\v{s}et\v{r}il conjectured $\chi'_s(G)\leq  1.25\Delta(G)^2$ for every graph $G$  (note that this conjecture was first presented in a formal paper by Faudree \etal\cite{Faudree198983}). They also pointed out that the upper bound in the conjecture is sharp. However, this conjecture is far away to be completely resolved until now.

Let $G$ be a graph with sufficiently large maximum degree $\Delta$ .
As $2\Delta^2-2\Delta+1$ is a trivial greedy upper bound for $\chi'_s(G)$, finding the biggest constant $\varepsilon$ such that
$s\chi'(G)\leq (2-\varepsilon)\Delta^2$ is  interesting. 
The extremal graph of Erd\H{o}s and Ne\v{s}et\v{r}il \cite{Faudree198983} implied $\varepsilon\leq 0.75$. In 1997, Molloy and Reed \cite{Molloy1997103} proved that $\varepsilon\geq 0.002$, and thus $\chi'_s(G)\leq  1.998\Delta^2$.
This was the first breakthrough result beating the trivial greedy upper bound for $\chi'_s(G)$.
The next improvement was due to Bruhn and Joos \cite{Bruhn201821}, who proved in 2018 that $\varepsilon\geq 0.07$.
Soon after, Bonamy, Perrett, and Postle \cite{bonamy2018colouring} improved this by showing that $\varepsilon\geq 0.165$.
Very recently (actually in 2021), Hurley, de Verclos, and Kang \cite{Hurley2021135} came to the best known result that $\varepsilon\geq 0.228$. This implies $\chi'_s(G)\leq  1.772\Delta^2$.

Let $\mathcal{H}$ be a hypergraph and denote $\mathcal{B}(\mathcal{H})=(\widetilde{V},\widetilde{E})$.
Since $\Delta(\mathcal{B}(\mathcal{H}))=\varrho(\mathcal{H})=\max\{r(\mathcal{H}),\Delta(\mathcal{H})\}$, we combine the first equality of \eqref{eq:relationship} with the result of Hurley, de Verclos, and Kang to obtain 
\begin{align}\label{eqqqq1}
\chii{\mathcal{H}}\leq 1.772\varrho(\mathcal{H})^2
\end{align}
for every hypergraph $\mathcal{H}$ with sufficiently large $\varrho(\mathcal{H})$.
On the other hand, since $\big|\{e'\in \widetilde{E}~|~1\leq \dist_{\mathcal{B}(\mathcal{H})}(e,e')\leq 2\}\big|\leq (\Delta(\mathcal{H})-1)r(\mathcal{H})+(r(\mathcal{H})-1)\Delta(\mathcal{H})=2r(\mathcal{H})\Delta(\mathcal{H})-r(\mathcal{H})-\Delta(\mathcal{H})<2r(H)\Delta(\mathcal{H})$ for each $e\in \widetilde{E}$,  
$\chi(L(\mathcal{B}(\mathcal{H}))^2)\leq \Delta(L(\mathcal{B}(\mathcal{H}))^2)+1\leq 2r(H)\Delta(\mathcal{H})$, and thus 
\begin{align}\label{eqqqq2}
   \chii{\mathcal{H}}\leq 2r(H)\Delta(\mathcal{H}).
\end{align}
for every hypergraph $\mathcal{H}$ by the second  equality of \eqref{eq:relationship}.

The first goal of this paper is to break the 1.772 barrier of \eqref{eqqqq1} or the 2 barrier of \eqref{eqqqq2}.
This seems extremely challenging yet alternatively it is natural to consider the same problem for restricted hypergraphs.
Maybe the first one special hypergraph class in our mind is the class of  
linear hypergraphs, which are well studied in the literature \cite{Henning}.
To go a step further, we introduce a new notion generalizing linear hypergraphs.

A hypergraph $\mathcal{H}=(X,S)$ is a \textit{$\mathbf{\textit{t}}$-quasi-linear hypergraph} if
\begin{itemize}
    \item $|s\cap s'|\leq t$ for every two distinct edges $s,s'\in S$;
    \item $\big|\{s\in S~|~\{x,x'\}\subseteq s\}\big|\leq t$ for every two distinct vertices $x,x'\in X$.
\end{itemize}

\noindent Clearly, linear hypergraphs are exactly $1$-quasi-linear hypergraphs.

In the next sections we devote to proving the following theorem. A more detailed form of the result will be presented in Section \ref{sec:4} by Theorem \ref{maintheorem}.

\begin{thm}\label{thm:limits}
Let $\mathcal{H}$ be a $t$-quasi-linear hypergraph with $t<<r(\mathcal{H})$. If $\Delta(\mathcal{H})$ is sufficiently large, then
\begin{align*}
    \chii{\mathcal{H}}\leq \bigg(\frac{4}{3}+o(1)\bigg)r(\mathcal{H})\Delta(\mathcal{H}).
\end{align*}
\end{thm}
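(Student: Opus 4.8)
The plan is to translate the incidence-coloring problem into a strong-edge-coloring problem on the Levi graph $\mathcal{B}(\mathcal{H})$ via Observation~\ref{lem:bijection} and then exploit the very special structure of $\mathcal{B}(\mathcal{H})$. Recall $\mathcal{B}(\mathcal{H})$ is bipartite with parts $X$ and $S$; every vertex of $S$ has degree at most $r=r(\mathcal{H})$ and every vertex of $X$ has degree at most $\Delta=\Delta(\mathcal{H})$. An edge $xs$ of $\mathcal{B}(\mathcal{H})$ conflicts (in the square of the line graph) with edges incident to $x$ (at most $\Delta-1$ of them) and with edges incident to $s$ (at most $r-1$), plus edges $x's'$ at distance exactly two through a common neighbour. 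The quasi-linearity hypothesis is precisely what controls these distance-two conflicts: the $t$-bound on $|s\cap s'|$ and on the number of edges through a pair of vertices means the neighbourhoods overlap in a very limited way, so many of the potential conflicts coincide. The first step is therefore to bound carefully the number of \emph{distinct} edges at distance at most $2$ from a fixed edge $xs$; the naive count is $2r\Delta$, and the goal is to show that, up to lower-order terms, the color classes can be packed so that only about $\frac{4}{3}r\Delta$ colors are needed.

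Next I would set up a probabilistic (nibble / Rödl-style semi-random) argument, following the template used in the recent strong-edge-coloring papers of Bruhn--Joos, Bonamy--Perrett--Postle, and Hurley--de Verclos--Kang, but adapted to the bipartite, highly unbalanced setting $r \ll \Delta$ (or more precisely $t \ll r$, with $r$ and $\Delta$ both large). The key structural gain over a general graph of maximum degree $\varrho=\max\{r,\Delta\}$ is that conflicts split cleanly into the ``$X$-side'' type (edges sharing the endpoint in $X$, together with their distance-two extensions through $S$) and the ``$S$-side'' type. On the $S$-side, two edges $xs$ and $x's$ with $x\neq x'$ always conflict; on the $X$-side, $xs$ and $xs'$ always conflict; the mixed distance-two conflicts $xs \sim x's'$ with $x\neq x'$, $s\neq s'$ require both $x'\in s$ and... no, require one of $x \in s'$ or $x' \in s$. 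The point is that for a fixed $s$, the set $\bigcup_{x\in s} \{s' : x \in s'\}$ has size close to $r\Delta$, but the pairs inside a single edge $s$ only produce $t$-bounded additional structure. A more refined accounting, in the spirit of reserving colors and analysing the dependency structure of a random partial colouring, should yield that the "local" chromatic demand is $(\frac{4}{3}+o(1))r\Delta$ rather than $(2-o(1))r\Delta$; the constant $\frac 4 3$ presumably comes from an optimised two-phase colouring where one first colours, say, all incidences $(x,s)$ with a fixed $x$ greedily using $\approx r\Delta$ colours on one side and then reuses colours cleverly because the bipartite structure forbids certain conflict patterns.

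Concretely, the steps I would carry out are: (1) reduce to $\mathcal{B}(\mathcal{H})$ and record the exact conflict graph $L(\mathcal{B}(\mathcal{H}))^2$, noting that its maximum degree is at most $2r\Delta - r - \Delta$ but that its \emph{local structure} is far from a clique — indeed the neighbourhood of an edge $xs$ contains large ``independent-ish'' blocks coming from distinct edges $s'\ni x$, each contributing $r-1$ mutually non-adjacent (in $L(\mathcal{B})$) vertices up to quasi-linear corrections; (2) apply a concentration/nibble argument to colour a $(1-\epsilon)$-fraction of edges so that every uncoloured edge still has many available colours and few uncoloured neighbours; (3) iterate, controlling the parameters via Chernoff and Lovász Local Lemma bounds, with all error terms absorbed into the $o(1)$; (4) finish the last sliver greedily. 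Throughout, the hypothesis $t \ll r$ is used to ensure that every quasi-linear correction term is $o(r\Delta)$ and hence negligible. The main obstacle, and the technical heart of the paper, will be step (2)–(3): proving that the refined constant is $\frac 4 3$ and not merely something like $2-\delta$ for a small $\delta$. This requires identifying the right local sparsity parameter of $L(\mathcal{B}(\mathcal{H}))^2$ (some analogue of the fact that the neighbourhood of each vertex is covered by two "sides", each of which is relatively sparse) and feeding it into the sparsity-sensitive bounds for $\chi$ of graphs with bounded local density — the same mechanism by which Molloy--Reed and successors beat $2\Delta^2$ for strong edge colouring, but here yielding the cleaner fraction $\frac 4 3$ because the bipartiteness makes the "two overlapping cliques" picture exact rather than approximate. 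I expect the detailed form in Theorem~\ref{maintheorem} to make the $o(1)$ explicit as a function of $t/r$ and $1/\Delta$, and the bulk of Sections~2–4 to be the careful verification of the concentration inequalities and the local-lemma dependency bounds.
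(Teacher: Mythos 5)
Your overall skeleton --- pass to the Levi graph via Observation \ref{lem:bijection}, quantify the local sparsity of $L(\mathcal{B}(\mathcal{H}))^2$, and feed it into a sparsity-sensitive chromatic bound --- is indeed the paper's strategy, and you are right that this is where the work lies; but the paper does \emph{not} rerun a nibble argument, it cites the Hurley--de Verclos--Kang theorem as a black box (Lemma \ref{lem:newbound}: a $\sigma$-sparse graph $G$ of large maximum degree satisfies $\chi(G)\leq(1-\sigma/2+\sigma^{3/2}/6+o(1))\Delta(G)$). You also omit a technical but necessary step: the counting is carried out only after embedding $\mathcal{H}$ into an $r$-uniform, $\Delta$-regular, $t$-quasi-linear hypergraph $\mathcal{H}^*$ (Proposition \ref{prop:supergraph}), whose Levi graph is a $K_{2,t+1}$-free $(r,\Delta)$-biregular bipartite graph (Proposition \ref{prop:regular-bipartite}); the $K_{2,t+1}$-freeness is the precise form in which quasi-linearity enters every subsequent estimate.

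The genuine gap is in your account of where $4/3$ comes from, and it matters because your heuristics predict the wrong constant. The neighbourhood of a vertex $v_e$ of $L(\mathcal{B}(\mathcal{H}))^2$ is \emph{not} ``two overlapping cliques made exact'': besides the small clique of the at most $r+\Delta-2$ edges meeting $e=uv$, it consists of two sets of size roughly $r\Delta$ each (edges at distance two through $v$, respectively through $u$) which, when $t\ll r$, are \emph{almost independent} in $L(\mathcal{B}(\mathcal{H}))^2$. The heart of the paper is the codegree count of Lemma \ref{lem:counting-edge}, showing this neighbourhood spans at most $\frac{1}{2}\big((4r-3)t\Delta^2+o(\Delta^2)\big)$ edges, i.e.\ only an $O(t/r)$-fraction of $\binom{2r\Delta}{2}$; hence $\sigma\to 1$ (Theorem \ref{lem:a}) and the formula gives $\chi\leq\big(1-\frac{1}{2}+\frac{1}{6}+o(1)\big)\cdot 2r\Delta=\big(\frac{4}{3}+o(1)\big)r\Delta$. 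Had the neighbourhood genuinely been two cliques of size $r\Delta$, one would only get $\sigma=\frac{1}{2}$ and hence about $1.62\,r\Delta$; and the alternative ``two-phase greedy with colour reuse'' you float has no counterpart in the proof and would not beat $2r\Delta$ by a constant factor on its own. So the missing idea is precisely the quantitative local-sparsity computation showing the conflict graph is locally almost edgeless --- that, and not bipartite clique structure, is the engine of the $4/3$.
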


Another special hypergraph class we investigate in this paper is the class of $\alpha$-acyclic hypergraphs. 
Note that the $\alpha$-acyclicity is an important notion in database theory \cite{zbMATH05964612}.
Many NP-hard problems concerning databases can be solved in polynomial time when restricted to
instances for which the corresponding hypergraphs are $\alpha$-acyclic \cite{zbMATH04014074,zbMATH03868639}.

Specifically, Graham \cite{Gra79}, and independently, Yu and \"Oszoyoglu \cite{Yu1979306} defined the $\alpha$-acyclicity of a hypergraph
through a property of reducibility to the empty hypergraph via a certain ``reduction'' process called \textit{GYO-reduction}.
Given a hypergraph $\mathcal{H}$, the GYO-reduction applies the following operations repeatedly to $\mathcal{H}$ until none can be applied anymore: (i) eliminate	a vertex in	only one edge; (ii) eliminate an edge	contained in another; and (iii)
eliminate	an edge containing no vertex.
A hypergraph is \textit{$\alpha$-acyclic} if GYO-reduction on it results in an empty hypergraph. 

An interesting phenomenon pointed out by Simon and Wojtczak \cite{Simon2017402} for the $\alpha$-acyclicity is that a subhypergraph of an $\alpha$-acyclic hypergraph may not be $\alpha$-acyclic. In other words, $\alpha$-acyclic is not a hereditary property for hypergraphs.
Surprisingly, $\alpha$-acyclic is definitely a hereditary property for linear hypergraphs. We will show the reason for this in Section \ref{sec:6} and then apply this property to prove the following.

\begin{thm}\label{cort}
If $\mathcal{H}$ is an $\alpha$-acyclic linear hypergraph, then $\chii{\mathcal{H}}\leq \Delta(\mathcal{H})+r(\mathcal{H})-1$. In particular, if we further assume $\mathcal{H}$ is $k$-uniform, then $\chii{\mathcal{H}}= \Delta(\mathcal{H})+k-1$.
\end{thm}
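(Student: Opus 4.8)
The strategy is to use Observation~\ref{lem:bijection} to translate the problem into a strong edge coloring of the Levi graph $\mathcal{B}:=\mathcal{B}(\mathcal{H})$, to prove that $\mathcal{B}$ is a \emph{forest}, and then to color its edges greedily along a rooted breadth-first order. Recall $\mathcal{B}$ is bipartite with parts $X$ and $S$, where $\D_{\mathcal{B}}(x)=\D_{\mathcal{H}}(x)\le\Delta(\mathcal{H})$ for $x\in X$ and $\D_{\mathcal{B}}(s)=|s|\le r(\mathcal{H})$ for $s\in S$. The first claim is that $\mathcal{B}$ is a forest. Suppose not; then $\mathcal{B}$ has a cycle, which has length at least $6$ because $\mathcal{H}$ is linear, and hence exhibits distinct edges $s_1,\dots,s_\ell$ $(\ell\ge 3)$ and distinct vertices $x_1,\dots,x_\ell$ of $\mathcal{H}$ with $\{x_i,x_{i+1}\}\subseteq s_i$ (indices modulo $\ell$). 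The subhypergraph $\mathcal{C}$ of $\mathcal{H}$ with edge set $\{s_1,\dots,s_\ell\}$ and vertex set $s_1\cup\dots\cup s_\ell$ is linear, every edge of it has size at least $2$ (so no edge of $\mathcal{C}$ is contained in another and none is empty), and each $x_i$ lies in two edges of $\mathcal{C}$; moreover deleting from $\mathcal{C}$ a vertex lying in only one of its edges preserves all of these properties, since each edge $s_i$ always retains the pair $\{x_i,x_{i+1}\}$ (the $x_i$ keep degree at least $2$ as no edge is ever removed). Therefore GYO-reduction on $\mathcal{C}$ halts at a non-empty hypergraph, so $\mathcal{C}$ is not $\alpha$-acyclic --- contradicting $\mathcal{C}\subseteq\mathcal{H}$ together with the hereditariness of $\alpha$-acyclicity for linear hypergraphs proved in Section~\ref{sec:6}. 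Hence $\mathcal{B}$ is a forest.

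Root each tree-component of $\mathcal{B}$ arbitrarily, fix a breadth-first order of its vertices, and let $\mathrm{dep}(\cdot)$ be the depth function. Every edge of $\mathcal{B}$ equals $e_v:=\{v,p(v)\}$ for a unique non-root vertex $v$, its \emph{lower endpoint}, where $p(v)$ is the parent of $v$. Process the edges in breadth-first order of their lower endpoints, coloring $e_v$ with the least color in $\{1,\dots,\Delta(\mathcal{H})+r(\mathcal{H})-1\}$ not appearing on a previously processed edge within distance $2$. To see this never gets stuck, fix $e_v$ and put $p:=p(v)$ and, if $p$ is not a root, $g:=p(p)$. A short case analysis on the edges within distance $2$ of $e_v$ shows that the ones whose lower endpoint has depth at most $\mathrm{dep}(v)$ are exactly the edges incident with $p$ or with $g$, while every other such edge has its lower endpoint strictly deeper than $v$ and is therefore still uncolored. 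Hence the already-colored edges within distance $2$ of $e_v$ lie in $(A\cup B)\setminus\{e_v\}$, where $A$ is the set of edges incident with $p$ (so $|A|=\D_{\mathcal{B}}(p)$) and $B$ is the set of edges incident with $g$ other than $pg$ (so $|B|=\D_{\mathcal{B}}(g)-1$). Since $A\cap B=\emptyset$ and $e_v\in A\setminus B$, their number is at most $\D_{\mathcal{B}}(p)+\D_{\mathcal{B}}(g)-2$ (and at most $\D_{\mathcal{B}}(p)-1$ when $p$ is a root). As $\mathcal{B}$ is bipartite, $v$ and $g$ lie on the same side and $p$ on the other, so one of $\D_{\mathcal{B}}(p),\D_{\mathcal{B}}(g)$ is at most $\Delta(\mathcal{H})$ and the other at most $r(\mathcal{H})$; thus strictly fewer than $\Delta(\mathcal{H})+r(\mathcal{H})-1$ colors are forbidden, the greedy step always succeeds, and $\chii{\mathcal{H}}=\chi'_s(\mathcal{B})\le\Delta(\mathcal{H})+r(\mathcal{H})-1$.

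For the ``in particular'' statement, assume in addition that $\mathcal{H}$ is $k$-uniform (so $r(\mathcal{H})=k$) and has at least one edge, and pick a vertex $x$ with $\D_{\mathcal{H}}(x)=\Delta(\mathcal{H})$, lying in edges $s_1,\dots,s_{\Delta(\mathcal{H})}$. The incidences $(x,s_1),\dots,(x,s_{\Delta(\mathcal{H})})$ are pairwise adjacent by clause~(i) of the adjacency definition; the incidences $(y,s_1)$ with $y\in s_1$ are pairwise adjacent by clause~(ii); and every $(x,s_i)$ is adjacent to every $(y,s_1)$ because $\{x,y\}\subseteq s_1$. These two families intersect only in $(x,s_1)$, so their union is a set of $\Delta(\mathcal{H})+k-1$ pairwise adjacent incidences, giving $\chii{\mathcal{H}}\ge\Delta(\mathcal{H})+k-1$. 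Combined with the upper bound this yields $\chii{\mathcal{H}}=\Delta(\mathcal{H})+k-1$, and in particular shows the bound $\Delta(\mathcal{H})+r(\mathcal{H})-1$ is sharp.

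The distance-$2$ bookkeeping in the greedy step is routine but must be carried out carefully to get the constant right. The genuinely substantial point is the structural claim that the Levi graph of an $\alpha$-acyclic linear hypergraph is a forest: it rests on the hereditariness of $\alpha$-acyclicity for linear hypergraphs --- which fails for hypergraphs in general and is established in Section~\ref{sec:6} --- and without it one could not descend to a minimal Berge cycle and observe that GYO-reduction visibly gets stuck there.
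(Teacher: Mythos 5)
Your proof is correct, but the upper bound is obtained by a genuinely different argument from the paper's. The paper (Theorem \ref{ccxx}) proceeds by induction: it deletes a leaf $r$ of the tree $T=\mathcal{B}(\mathcal{H})$, colors the remainder inductively, and then must invoke the color-permutation lemma (Lemma \ref{lem:permution}) to rearrange the inductive coloring so that the palettes appearing at the grandchildren of $r$ become nested; only after that rearrangement is the set of colors forbidden on the last edge of size at most $\Delta(\mathcal{H})+r(\mathcal{H})-2$. Your breadth-first greedy coloring makes both the induction and the permutation lemma unnecessary: when the edge $e_v=vp(v)$ is processed, every already-colored edge within distance two of it is incident with $p:=p(v)$ or with $g:=p(p)$, and since $p$ and $g$ lie on opposite sides of the bipartition of $\mathcal{B}(\mathcal{H})$ into $X$ and $S$, at most $\D_{\mathcal{B}(\mathcal{H})}(p)+\D_{\mathcal{B}(\mathcal{H})}(g)-2\leq \Delta(\mathcal{H})+r(\mathcal{H})-2$ colors are forbidden. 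Your distance-two bookkeeping is right (edges descending from $v$, from its siblings, or from its cousins either have strictly deeper lower endpoints or are at distance at least three), so this is a shorter and more transparent route to the same bound. The lower-bound clique of $\Delta(\mathcal{H})+k-1$ pairwise adjacent incidences is the same as the paper's.

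One logical wrinkle in your derivation of the forest property: you justify it by appealing to the hereditariness of $\alpha$-acyclicity for linear hypergraphs (Lemma \ref{lvu}), but in the paper that hereditariness is itself deduced from the forest property (Lemma \ref{thm:linear} via Corollary \ref{cora}). Citing Lemma \ref{lvu} is legitimate since the paper establishes it before Theorem \ref{cort}, but read as a self-contained proof of the forest claim your argument would be circular; the simplest fix is to cite Lemma \ref{thm:linear} directly. Better still, your GYO observation needs no detour through the subhypergraph $\mathcal{C}$ at all: by linearity each $s_i$ contains the two distinct vertices $x_i,x_{i+1}$ and so is contained in no other edge of $\mathcal{H}$, each $x_i$ lies in the two distinct edges $s_{i-1},s_i$, and these facts are preserved by every GYO step applied to $\mathcal{H}$ itself, so the reduction of $\mathcal{H}$ never reaches the empty hypergraph and $\mathcal{H}$ is not $\alpha$-acyclic. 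That direct argument is more elementary than the paper's proof of Lemma \ref{thm:linear}, which relies on the characterization of Lemma \ref{thm:alpha}.
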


\section{Properties of quasi-linear hypergraphs} 

A bipartite graph $G$ with bipartition $A$ and $B$ is \textit{$(a,b)$-bipartite} if $\Delta(A)=a$ and $\Delta(B)=b$, and is \textit{$(a,b)$-regular} if each vertex of $A$ has degree $a$ and each vertex of $B$ has degree $b$. A graph $G$ is \textit{$H$-free} if $G$ does not contain $H$ as a subgraph.

\begin{prop}\label{prop:regular-bipartite}
If $\mathcal{H}$ is a $k$-uniform $\Delta(\mathcal{H})$-regular $t$-quasi-linear hypergraph, then $\mathcal{B}(\mathcal{H})$ is a $K_{2,t+1}$-free $(k,\Delta(\mathcal{H}))$-regular bipartite graph.
\end{prop}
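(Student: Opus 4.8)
The plan is to verify the two asserted properties of $\mathcal{B}(\mathcal{H})$ separately, each by unwinding the relevant definition. Write $\mathcal{B}(\mathcal{H})=(X,S;E)$ with $E=\{xs~|~x\in X,\,s\in S,\,x\in s\}$.

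First I would check biregularity. For a vertex $x\in X$, its neighbours in $\mathcal{B}(\mathcal{H})$ are exactly the edges $s\in S$ with $x\in s$, so $\D_{\mathcal{B}(\mathcal{H})}(x)=\D_{\mathcal{H}}(x)=\Delta(\mathcal{H})$ since $\mathcal{H}$ is $\Delta(\mathcal{H})$-regular. For a vertex $s\in S$, its neighbours are exactly the vertices of $\mathcal{H}$ lying in $s$, so $\D_{\mathcal{B}(\mathcal{H})}(s)=|s|=k$ since $\mathcal{H}$ is $k$-uniform. Taking $A=S$ and $B=X$ in the definition of $(a,b)$-regular, this is precisely the statement that $\mathcal{B}(\mathcal{H})$ is $(k,\Delta(\mathcal{H}))$-regular.

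Next, $K_{2,t+1}$-freeness. Suppose for contradiction that $\mathcal{B}(\mathcal{H})$ contains a copy of $K_{2,t+1}$ with parts $P$ (of size $2$) and $Q$ (of size $t+1$). Since $K_{2,t+1}$ is connected and $\mathcal{B}(\mathcal{H})$ is bipartite with classes $X$ and $S$, each of $P$ and $Q$ lies entirely within one class, and on opposite sides. If $P=\{x,x'\}\subseteq X$ and $Q=\{s_1,\ldots,s_{t+1}\}\subseteq S$, then each $s_i$ is adjacent in $\mathcal{B}(\mathcal{H})$ to both $x$ and $x'$, i.e.\ $\{x,x'\}\subseteq s_i$ for every $i$, whence $\big|\{s\in S~|~\{x,x'\}\subseteq s\}\big|\geq t+1$, contradicting the second condition in the definition of a $t$-quasi-linear hypergraph. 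If instead $P=\{s,s'\}\subseteq S$ and $Q=\{x_1,\ldots,x_{t+1}\}\subseteq X$, then each $x_j$ lies in both $s$ and $s'$, so $|s\cap s'|\geq t+1$, contradicting the first condition. Hence no such copy exists, and combining the two parts proves the proposition.

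I do not anticipate a genuine obstacle: the statement is essentially a dictionary entry translating the two defining inequalities of $t$-quasi-linearity into a forbidden-subgraph property of the Levi graph, together with the trivial identification of uniformity-plus-regularity of $\mathcal{H}$ with biregularity of $\mathcal{B}(\mathcal{H})$. The only point deserving a word of care is why the size-$2$ part of a putative $K_{2,t+1}$ cannot straddle both classes of $\mathcal{B}(\mathcal{H})$; this is immediate from connectivity of $K_{2,t+1}$ and uniqueness of the bipartition of a connected bipartite graph. For $t=1$ the copy in question is simply a $4$-cycle, and the argument specialises to the familiar fact that linear hypergraphs have $C_4$-free Levi graphs.
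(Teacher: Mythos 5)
Your proof is correct and follows essentially the same route as the paper's: read off the degrees to get $(k,\Delta(\mathcal{H}))$-regularity, then rule out a $K_{2,t+1}$ by the same two-case analysis against the two defining conditions of $t$-quasi-linearity. Your extra remark on why the two parts of a putative $K_{2,t+1}$ cannot straddle the bipartition is a small point of added care that the paper leaves implicit.
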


\begin{proof}
Let $\mathcal{H}=(X,S)$ and $\Delta=\Delta(\mathcal{H})$.
Since $\mathcal{H}$ is $k$-uniform and $\Delta$-regular,
$B:=\mathcal{B}(\mathcal{H})$ is a $(k,\Delta)$-regular bipartite graph with bipartition $X$ and $S$, where 
every vertex of $X$ has degree $\Delta$ and every vertex of $S$ has degree $k$ in $B$. 

Suppose for a contradiction that $B$ contains a copy of $K_{2,t+1}$ as a subgraph. We distinguish two asymmetric cases.
If there are two vertices $x_1,x_2\in X$ and $t+1$ vertices $s_1,\ldots,s_{t+1}\in S$ of $B$ such that $x_is_j\in E(B)$ (thus $x_i\in s_j\in S$ in $\mathcal{H}$) for each $1\leq i\leq 2$ and $1\leq j\leq t+1$, then $\{x_1,x_2\}\subseteq \bigcap_{j=1}^{t+1}s_j$.
If there are two vertices $s_1,s_2\in S$ and $t+1$ vertices $x_1,\ldots,x_{t+1}\in X$ of $B$ such that $x_is_j\in E(B)$ (thus $x_i\in s_j\in S$ in $\mathcal{H}$) for each $1\leq i\leq t+1$ and $1\leq j\leq 2$, then 
$\{x_1,\ldots,x_{t+1}\}\subseteq s_1\cap s_2$.
Each of the above two conclusions contradicts the definition of the $t$-quasi-linearity.
\end{proof}

\begin{prop}\label{prop:supergraph}
If $\mathcal{H}$ is a $t$-quasi-linear hypergraph, 
then there exists an $r(\mathcal{H})$-uniform $\Delta(\mathcal{H})$-regular $t$-quasi-linear hypergraph $\mathcal{H}^*$ containing $\mathcal{H}$ as a subhypergraph. 
\end{prop}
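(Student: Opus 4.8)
The plan is to embed $\mathcal{H}=(X,S)$ into a larger hypergraph in two stages: first make every edge have size exactly $r:=r(\mathcal{H})$ (uniformity), then make every vertex have degree exactly $\Delta:=\Delta(\mathcal{H})$ (regularity), all the while preserving $t$-quasi-linearity. For the first stage, I would take, for each edge $s\in S$ with $|s|<r$, a set $Y_s$ of $r-|s|$ brand-new vertices (pairwise disjoint across distinct edges $s$, and disjoint from $X$), and replace $s$ by $s\cup Y_s$. Call the result $\mathcal{H}_1=(X_1,S_1)$. Since every added vertex lies in exactly one edge, no new pair of edges gains a common vertex beyond what they already shared, and no pair of vertices gains a new common edge unless it was already in some common edge; hence $|s\cap s'|\le t$ and $|\{s:\{x,x'\}\subseteq s\}|\le t$ still hold. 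Also $\Delta(\mathcal{H}_1)=\Delta$ (new vertices have degree $1\le\Delta$, old vertices keep their degree) and $r(\mathcal{H}_1)=r$, and now $\mathcal{H}_1$ is $r$-uniform.

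For the second stage, I need to raise low-degree vertices to degree exactly $\Delta$ by adding new $r$-sets, without creating any pair of edges meeting in more than $t$ points and without creating any pair of vertices lying together in more than $t$ edges. The clean way is to take many disjoint copies: let $\mathcal{H}_1^{(1)},\dots,\mathcal{H}_1^{(\Delta)}$ be $\Delta$ vertex-disjoint copies of $\mathcal{H}_1$, with $x^{(i)}$ denoting the copy of vertex $x$ in $\mathcal{H}_1^{(i)}$. In the disjoint union $\bigsqcup_i \mathcal{H}_1^{(i)}$, for each vertex $x\in X_1$ of degree $d_x:=\D_{\mathcal{H}_1}(x)<\Delta$ I want to add $\Delta-d_x$ further edges through each copy $x^{(i)}$. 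The idea is to use fresh ``padding'' vertices to form these edges, so that each new edge is an $r$-set consisting of one old vertex (in some copy) plus $r-1$ new vertices used nowhere else; then every new edge meets every other edge (old or new) in at most one vertex, so $t$-quasi-linearity is automatic, each new edge has size exactly $r$, and each targeted vertex gains exactly the required number of incidences. One must check the arithmetic works out so that after this all degrees are exactly $\Delta$: a vertex $x^{(i)}$ of the copies gets $\Delta-d_x$ new edges and so reaches degree $\Delta$, while the padding vertices themselves must also be brought to degree $\Delta$ — so one iterates or, more simply, chooses the padding vertices to be the low-degree vertices of yet more disjoint copies of $\mathcal{H}_1$, pairing them up so that a deficiency of $\Delta-d_x$ at one vertex is absorbed by deficiencies elsewhere.

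The main obstacle is precisely this bookkeeping in the second stage: ensuring that the padding/auxiliary vertices can themselves be saturated to degree $\Delta$ without an infinite regress and without a parity or counting obstruction. The standard trick to resolve it is to first reduce to the case where $\mathcal{H}_1$ already has all degrees equal by a preliminary doubling: take enough disjoint copies and add a single ``linking'' structure so that the total degree deficiency $\sum_x(\Delta-\D(x))$ becomes divisible by $r-1$ (or whatever modulus is needed), then realize the deficiencies greedily using disjoint fresh $r$-sets, each fresh vertex placed in exactly $\Delta$ of them. Because each such fresh edge is almost entirely new vertices, it meets any other edge in $\le 1$ point, so the $t$-quasi-linearity conditions — which only need intersections bounded by $t\ge 1$ — are never threatened; linearity-type obstructions are the only ones that could bite, and we are safely under that threshold. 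Throughout, one checks $r(\mathcal{H}^*)=r$ and $\Delta(\mathcal{H}^*)=\Delta$ are unchanged, and $\mathcal{H}\subseteq\mathcal{H}_1^{(1)}\subseteq\mathcal{H}^*$ gives the required containment.
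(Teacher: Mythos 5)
Your first stage (padding each short edge with fresh, pairwise-disjoint vertices to reach $r$-uniformity) is exactly what the paper does, and it is correct. The gap is in your second stage, and you have in fact named it yourself: after attaching a new edge $\{x\}\cup P_e$ with $r-1$ fresh padding vertices, those padding vertices have degree $1$ and must themselves be raised to degree $\Delta$. Your proposed fixes do not close this. Reusing a fresh vertex in $\Delta$ new edges forces you to control how often two fresh vertices co-occur (the second quasi-linearity condition) and how large the intersection of two new edges is (the first condition); doing this is tantamount to constructing an $r$-uniform $\Delta$-regular linear hypergraph with prescribed attachment points, i.e., essentially the statement you are trying to prove. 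The appeal to ``the standard trick'' of making $\sum_x(\Delta-\D(x))$ divisible by $r-1$ and then ``realizing the deficiencies greedily using disjoint fresh $r$-sets, each fresh vertex placed in exactly $\Delta$ of them'' is an unproved design-existence claim, not an argument; as written, the construction either regresses infinitely or leaves unverified combinatorial constraints.

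The paper sidesteps all of this with a cleaner device that you may want to adopt: take exactly $k=r(\mathcal{H})$ vertex-disjoint copies of the uniformized hypergraph $\mathcal{H}'$, and for each vertex $x_j$ of degree less than $\Delta$ add the single ``transversal'' edge $\{x_j^1,\ldots,x_j^k\}$ consisting of the $k$ copies of that vertex. This new edge has size exactly $k$, meets every old edge in at most one vertex (an old edge lives inside one copy, the new edge has one vertex per copy), distinct new edges are disjoint, and every deficient vertex gains degree exactly $1$. Hence one pass raises the minimum degree by exactly $1$ while preserving $k$-uniformity, $t$-quasi-linearity, and $\Delta$; iterating $\Delta-\delta$ times terminates with the desired $\mathcal{H}^*$. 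The point is that raising all deficiencies by one unit per round, rather than trying to saturate each vertex in one shot, eliminates both the regress and the design-theoretic bookkeeping.
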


\begin{proof}
Let $k=r(\mathcal{H})$ and $\Delta=\Delta(\mathcal{H})$.
We construct the desired hypergraph by following two steps.

Step 1. \textit{Construct a $k$-uniform $t$-quasi-linear hypergraph $\mathcal{H}'$ such that $\Delta(\mathcal{H}')=\Delta$ and $\mathcal{H}\subseteq \mathcal{H}'$.}

Let $\mathcal{H}=(X,S)$ and let $S_0=\{s\in S~|~|s|<k\}$. 
If $S_0=\emptyset$, then let $\mathcal{H}'=\mathcal{H}$; otherwise let $S_0=\{s_1,\ldots,s_\ell\}$ ($\ell\geq 1$).
Let $d_i=k-|s_i|$ ($i=1,\ldots,\ell$). Into each $s_i$ with $1\leq i\leq \ell$, we import a set $z_i$ of $d_i$ new vertices
in such a way that $z_i\cap X=\emptyset$ ($1\leq i\leq \ell$) and $z_i\cap z_j=\emptyset$ ($1\leq i\neq j\leq \ell$).
Now let $\mathcal{H}'=(X',S')$, where $X'=X\cup \bigcup_{i=1}^\ell z_i$ and $S'=(S\setminus S_0) \cup \bigcup_{i=1}^\ell (s_i\cup z_i)$.
One can easily check that $\mathcal{H}'$ is $k$-uniform and $t$-quasi-linear, $\Delta(\mathcal{H}')=\Delta(\mathcal{H})=\Delta$, and $\mathcal{H}\subseteq \mathcal{H}'$.

Let $\delta=\delta(\mathcal{H}')$. If $\delta=\Delta$, then $\mathcal{H}'$ is $\Delta$-regular and let $\mathcal{H}^*=\mathcal{H}'$, as desired; otherwise we turn to Step 2. 

Step 2. \textit{Construct a $k$-uniform $t$-quasi-linear hypergraph $\mathcal{H}''$ such that $\Delta(\mathcal{H}'')=\Delta$, $\mathcal{H}'\subseteq \mathcal{H}''$, and $\delta(\mathcal{H}'')>\delta(\mathcal{H}')$.}

Let $Y'=\{x\in X'~|~\D_{\mathcal{H}'}(x)<\Delta\}$.
As we are in this step, $\delta<\Delta$ and thus $Y'\neq \emptyset$.
So we may assume $Y'=\{x_1,\ldots,x_p\}$ ($p\geq 1$) and denote $X'=Y'\cup \{x_{p+1},\ldots,x_n\}$. 
Make $k$ vertex-disjoint copies $\mathcal{H'}_1,\ldots, \mathcal{H'}_k$ of $\mathcal{H'}$ and denote $\mathcal{H}'_i=(X'_i,S'_i)$ ($1\leq i\leq k$), where $X'_i=\bigcup_{j=1}^n \{x^i_j\}$ is the vertex set of each copy $\mathcal{H'}_i$ homomorphic to the vertex set $X'=\bigcup_{j=1}^n \{x_j\}$ of the original hypergraph $\mathcal{H}'$.
Now let $\mathcal{H''}=(X'',S'')$, where $X''=\bigcup_{i=1}^k X'_i$ and $S''=\bigcup_{i=1}^k S'_i \cup \bigcup_{j=1}^p\{x_j^1,\ldots,x^k_j\}$. One can immediately check that $\mathcal{H}''$ is $k$-uniform and $t$-quasi-linear and $\mathcal{H}'\subseteq \mathcal{H}''$. Moreover, $\Delta(\mathcal{H}'')=\Delta(\mathcal{H}')=\Delta$ and $\delta(\mathcal{H}'')=\delta(\mathcal{H}')+1$, as desired.

If $\delta(\mathcal{H}'')=\Delta$, then $\mathcal{H}''$ is $\Delta$-regular and let $\mathcal{H}^*=\mathcal{H}'$, as desired; otherwise we return back to Step 2 by letting $\mathcal{H}':=\mathcal{H}''$. This iteration would stop with a desired hypergraph $\mathcal{H}^*$ after we have visited Step 2 $\Delta-\delta$ times.
\end{proof}

The idea of proving Theorem \ref{thm:limits} is to bound $\chii{\mathcal{H}}$ above by $\chii{\mathcal{H}^*}$ as $\mathcal{H} \subseteq \mathcal{H}^*$, where 
$\mathcal{H}^*$ is the hypergraph obtained from $\mathcal{H}$ by Proposition \ref{prop:supergraph}.
Since $\Delta(\mathcal{H}^*)=\Delta(\mathcal{H})$, $B(\mathcal{H}^*)$ is a $K_{2,t+1}$-free $(r(\mathcal{H}),\Delta(\mathcal{H}))$-regular bipartite graph by Proposition \ref{prop:regular-bipartite}. Since
$\chii{\mathcal{H}^*}=\chi'_s(B(\mathcal{H}^*))$ by \eqref{eq:relationship}, it is sufficient to prove an upper bound for the strong chromatic index of birugular bipartite graphs, or exactly, $(r(\mathcal{H}),\Delta(\mathcal{H}))$-regular bipartite graphs.

\section{The sparsity of biregular bipartite graphs}

In a graph $G=(V,E)$, we let $N_G(v)=\{u\in V~|~uv\in E\}$ and $E_G(v)=\{e\in E~|~v\in e\}$.
For each edge $e\in E$, let 
\[D_2(e)=\{e'\in E\setminus\{e\}~|~\dist_G(e,e')\leq 2\}.\]
For every edge $e'\in D_2(e)$, let
\[\zeta(e',e)=|D_2(e')\cap D_2(e)|.\]

\begin{lem}\label{lem:counting-edge}
If $G$ is a $K_{2,t+1}$-free $(a,b)$-regular bipartite graph $(U,V; E)$, then for each edge $e\in E$,
\begin{align*}
   \sum_{e'\in D_2(e)}\zeta(e',e)\leq (4a-3)tb^2+o(b^2).
\end{align*}
\end{lem}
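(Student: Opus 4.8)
The plan is to fix an edge $e = uv$ with $u \in U$ and $v \in V$, and to estimate $\sum_{e' \in D_2(e)}\zeta(e',e)$ by summing, over each ordered pair $(e', e'')$ of edges with $e', e'' \in D_2(e)$ and $\dist_G(e', e'') \le 2$, the constant $1$; thus the quantity we must bound is the number of such ordered pairs. First I would partition $D_2(e)$ according to how an edge $e'$ meets $e$: the edges sharing the endpoint $u$ (there are $a-1$ of them, say $uw$ with $w \in N_G(u)\setminus\{v\}$), the edges sharing $v$ (there are $b-1$ of them), and the edges at distance exactly $2$, which are the edges $xy$ with $x \in N_G(v)\setminus\{u\}$, $y \in N_G(x)$, $y \ne v$ — there are at most $(b-1)(a-1)$ of these from the $v$-side and at most $(a-1)(b-1)$ from the $u$-side. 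So $|D_2(e)| \le (a-1) + (b-1) + (a-1)(b-1) + (b-1)(a-1) = O(ab)$, and since each $\zeta(e',e) \le |D_2(e)| = O(ab)$ the trivial bound is $O(a^2 b^2)$ — we need to do better by a factor that produces the coefficient $(4a-3)t$.

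The key structural input is $K_{2,t+1}$-freeness: any two distinct vertices on the same side of the bipartition have at most $t$ common neighbours. I would use this to bound the number of edges $e''$ within distance $2$ of a fixed $e' \in D_2(e)$ that also lie in $D_2(e)$. The heart of the matter is that an edge $e''\in D_2(e')\cap D_2(e)$ is forced to be ``close'' to both $e$ and $e'$, and these two proximity constraints intersect in a controlled way precisely because of the $K_{2,t+1}$ condition. Concretely, I would break the sum according to the type of $e'$ (shares $u$, shares $v$, or is at distance $2$ via the $U$-side or the $V$-side), and for each type count the pairs $(e',e'')$ by looking at which vertex or short path $e'$ and $e''$ must share with $e$. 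For example, when $e'$ and $e''$ both share the vertex $v$ with $e$, they automatically lie in $D_2(e)$, and there are $(b-1)(b-2)$ ordered such pairs — but $e'' \in D_2(e')$ is automatic here, so this contributes $\Theta(b^2)$; the subtler terms are when $e'$ is at distance $2$ from $e$ and $e''$ is ``between'' them, where $K_{2,t+1}$-freeness caps the number of connecting vertices by $t$ and turns what would be a cubic or quartic count into a quadratic one of the form $(\text{const} \cdot a) \cdot t \cdot b^2$. Summing the contributions of all the types and tracking the leading coefficients should yield $(4a-3)t \cdot b^2$, with all cross terms and lower-order terms absorbed into $o(b^2)$ (using $a = r(\mathcal{H})$ fixed relative to $b = \Delta$ growing, and $t \ll a$).

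The main obstacle I expect is the bookkeeping: getting the leading coefficient exactly $4a-3$ rather than something like $4a$ or $4a-2$ requires carefully identifying which pairs are double-counted across the types (an edge sharing $u$ with $e$ is at distance $2$ from an edge sharing $v$ with $e$, etc.) and which proximity relations are ``free'' versus ``costed'' by the $K_{2,t+1}$ bound. In particular I would need to be careful that when $e'$ shares only the $U$-endpoint structure with $e$, the number of valid $e''$ is governed by a common-neighbour count bounded by $t$, and to make sure the four ``$a$-weighted'' contributions (two from $e'$ being on the $U$-side at distance $2$, two more from the analogous counting of $e''$) combine to $4a$ before the $-3t b^2$ correction coming from over-counted same-vertex configurations is subtracted. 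The $o(b^2)$ slack makes the lower-order terms harmless, so the real work is entirely in the precise accounting of the $\Theta(tb^2)$ terms.
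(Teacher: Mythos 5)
Your overall strategy coincides with the paper's: fix $e=uv$, partition $D_2(e)$ into four classes according to how $e'$ meets $e$ (the $b-1$ edges at $v$, the $a-1$ edges at $u$, the at most $(a-1)(b-1)$ edges reached through $N_G(v)\setminus\{u\}$, and the at most $(a-1)(b-1)$ edges reached through $N_G(u)\setminus\{v\}$; call these $S_1,S_2,S_3,S_4$), bound $\zeta(e',e)$ class by class using the fact that $K_{2,t+1}$-freeness caps every codegree at $t$, and add up. So the plan is the right one. The gap is that the proposal stops exactly where the content of the lemma begins: none of the per-class estimates is actually carried out, and the one concrete mechanism you commit to for producing the constant --- ``four $a$-weighted contributions combine to $4a$ before the $-3tb^2$ correction coming from over-counted same-vertex configurations is subtracted'' --- is not how the coefficient arises; there is no inclusion--exclusion correction in the correct accounting.

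What actually happens is that the four sums $\sum_{e'\in S_i}\zeta(e',e)$ have leading $b^2$-coefficients $(a-1)+t$, $0$, $(a-1)t$, and $(a-1)(3t-1)$ respectively, and these add to $t\bigl(1+4(a-1)\bigr)=(4a-3)t$. Two features of this are easy to miss from your sketch. First, in the relevant regime ($a$, $t$ fixed, $b\to\infty$) the computation is genuinely asymmetric in the two sides: $S_2$ has only $a-1$ elements and contributes $o(b^2)$ outright, while $S_4$, whose elements $v_jw$ have $w$ ranging over a neighbourhood of size $b$, carries the dominant term of order $3(a-1)t\,b^2$; pairing $U$-side and $V$-side contributions ``by symmetry'' at the level of asymptotics would therefore miscount. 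Second, not every $\Theta(b^2)$ contribution is capped by $t$: for $e'=vu_i\in S_1$ the entire class $S_3$ lies in $D_2(e')$ automatically, giving a $t$-free term of about $(a-1)b^2$, which happens to be offset in the final sum by the $-(a-1)$ hiding inside the factor $(3t-1)$ of the $S_4$ coefficient. Until the four estimates are carried out --- each requiring a further split of $D_2(e')\cap D_2(e)$ by class, with the codegree bound $\leq t$ (or $\leq t-1$ when one common neighbour is already accounted for) applied at the right spots --- the constant $(4a-3)t$ remains unverified.
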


\begin{proof}
Assume by symmetry that $\D_G(u)=a$ for every $u\in U$ and $\D_G(v)=b$ for every $v\in V$. Denote $e$ by $uv$ ($u\in U$ and $v\in V$).
Let $N_G(v)\setminus \{u\}=\{u_1,u_2,\ldots,u_{b-1}\}$ and $N_G(u)\setminus \{v\}=\{v_1,v_2,\ldots,v_{a-1}\}$.
Denote $u_0=u$ and $v_0=v$.
Since $G$ is bipartite, $(N_G(v)\setminus \{u\}) \cap (N_G(u)\setminus \{v\})=\emptyset$. 
Let
\begin{align*}
    S_1&=\{vu_i~|~1\leq i\leq b-1\},\\
    S_2&=\{uv_i~|~1\leq i\leq a-1\},\\
    S_3&=\{u_iw~|~1\leq i\leq b-1, w\in N_G(u_i)\setminus \{v\}\}, and\\
    S_4&=\{v_iw~|~1\leq i\leq a-1, w\in N_G(v_i)\setminus \{u\}\}.
\end{align*}
Clearly, 
\begin{align}\label{4partition}
    D_2(e)= S_1\cup S_2\cup S_3\cup S_4
\end{align}

For any edge $e'=vu_i\in S_1$ ($1\leq i\leq b-1$),
\begin{align}\label{case1.1}
D_2(e')\cap (S_1\cup S_2)=(S_1\cup S_2)\setminus \{e'\} ~{\rm and}~ D_2(e')\cap S_3=S_3
\end{align}
If there is one edge $e''\in D_2(e')\cap (S_4\setminus S_3)$, then $e''=v_jw$ for some $1\leq j\leq a-1$ and $w\neq u$, and we further have
$v_j\in N_G(u_i)$ and $E_G(v_j)\setminus \{uv_j,u_iv_j\}\subseteq D_2(e')\cap (S_4\setminus S_3)$.
This implies 
\begin{align}\label{case1.2}
    D_2(e')\cap (S_4\setminus S_3)= \bigcup_{v_j\in N_G(u_i)} E_G(v_j)\setminus \{uv_j,u_iv_j\}.
\end{align}
Since $G$ is $K_{2,t+1}$-free, 
\begin{align}\label{free1}
   |N_G(u_i)\cap \{v_1,\ldots,v_{a-1}\}|\leq t-1. 
\end{align}
 Hence by 
\eqref{4partition}, \eqref{case1.1}, and \eqref{case1.2}, we conclude 
\begin{align*}
    D_2(e')\cap D_2(e)&\subseteq (D_2(e')\cap (S_1\cup S_2)) \cup (D_2(e')\cap S_3) \cup  (D_2(e')\cap (S_4\setminus S_3) \\
    &=(S_1\cup S_2\cup S_3)\setminus \{e'\} \cup  \bigcup_{v_j\in N_G(u_i)} E_G(v_j)\setminus \{uv_j,u_iv_j\}.
\end{align*}
Thus 
\begin{align}
   \notag\zeta(e',e)&\leq  |S_1\cup S_2\cup S_3|-1+(t-1)(b-2)\\
   \notag&\leq (b-1)+(a-1)+(b-1)(a-1)-1+(t-1)(b-2)\\
   \notag&=(a+t-1)b-2t.
\end{align}
by \eqref{free1}, and
\begin{align}\label{cal-1}
    \sum_{e'\in S_1}\zeta(e',e)\leq (b-1)\big((a+t-1)b-2t\big)
\end{align}
and by symmetry we further have
\begin{align}\label{cal-2}
    \sum_{e'\in S_2}\zeta(e',e)\leq (a-1)\big((b+t-1)a-2t\big).
\end{align}

For any edge $e'=u_iw\in S_3$ ($1\leq i\leq b-1$, $w\in N_G(u_i)\setminus \{v\}$),
\begin{align}\label{case2.1}
D_2(e')\cap S_1=S_1.
\end{align}
Let $A=N_G(v)\cap N_G(w)$ and $\tilde{A}=N_G(v)\setminus A$. Note that $u_i\in A$ and thus $u_i\not\in \tilde{A}$.
Now
\begin{align}\label{3.8}
    \bigg(\bigcup_{u_j\in A} E_G(u_j)\setminus \{vu_j\}\bigg)\setminus \{e'\}\subseteq D_2(e').
\end{align}
Since $G$ is $K_{2,t+1}$-free, 
$|A|\leq t$.
Hence 
\begin{align}\label{case2.3}
    \bigg|D_2(e') \cap \bigg(\bigcup_{u_j\in A} E_G(u_j)\setminus\{vu_j\}\bigg)\bigg|\leq \bigg|\bigcup_{u_j\in A} E_G(u_j)\setminus\{vu_j\}\bigg|-1\leq t(a-1)-1
\end{align}
by \eqref{3.8}.

If there is one edge $e''\in D_2(e')\cap (S_2\cup S_3)$ incidence with some vertex in $\tilde{A}$, then $e''=u_jw'$ for some $u_j\in \tilde{A}$ ($0\leq j\leq b-1$) and $w'\neq v,w$, and we further have $u_iw'\in E$, i.e., $w'\in N_G(u_i)\setminus \{v,w\}$. 
Hence
\begin{align}
    D_2(e')\cap \bigg(\bigcup_{u_j\in \tilde{A}} E_G(u_j)\setminus\{vu_j\}\bigg)
    \notag &=\big\{u_jw'~|~u_j\in\tilde{A}, w'\in N_G(u_i)\setminus\{v,w\}\big\}\\
   \label{case2.2} &=\big\{u_jw'~|~u_j\in N_G(v)\cap N_G(w')\cap \tilde{A}, w'\in N_G(u_i)\setminus\{v,w\}\big\}
\end{align}
Since $G$ is $K_{2,t+1}$-free, $|N_G(v)\cap N_G(w')\cap \tilde{A}|\leq t-1$ for every $w'\in N_G(u_i)\setminus\{v,w\}$ (recall that $u_i\not\in \tilde{A}$). Hence 
\begin{align}\label{case2.4}
    \bigg|D_2(e') \cap \bigg(\bigcup_{u_j\in \tilde{A}} E_G(u_j)\setminus\{vu_j\}\bigg)\bigg|\leq (t-1)(a-2)
\end{align}
by \eqref{case2.2}.

Combining \eqref{case2.3} and \eqref{case2.4}, we conclude
\begin{align}\label{S2S3}
   \notag |D_2(e')\cap (S_2\cup S_3)|&=\bigg|D_2(e') \cap \bigg(\bigcup_{u_j\in A} E_G(u_j)\setminus\{vu_j\}\bigg)\bigg|+\bigg|D_2(e') \cap \bigg(\bigcup_{u_j\in \tilde{A}} E_G(u_j)\setminus\{vu_j\}\bigg)\bigg|\\
    &\leq (2a-3)t-a+1.
\end{align}

Let $B=N_G(u)\cap N_G(u_i)\setminus \{v\}$ and $\tilde{B}=N_G(u)\setminus (B\cup \{v\})$.
Now
\begin{align}\label{3.13}
    \bigcup_{v_j\in B} E_G(v_j)\setminus \{uv_j,u_iv_j\} \subseteq D_2(e')\setminus (S_2\cup S_3).
\end{align}
Since $G$ is $K_{2,t+1}$-free, 
$|B|\leq t-1$.
Hence 
\begin{align}
\notag \bigg|D_2(e')\cap S_4\cap  \bigg(\bigcup_{v_j\in B} E_G(v_j)\setminus \{uv_j\}\bigg)\bigg|
    &\leq \bigg|D_2(e') \cap \bigg(\bigcup_{v_j\in B} E_G(v_j)\setminus \{uv_j,u_iv_j\}\bigg)\bigg|\\
    \label{case2.3}&= \bigg|\bigcup_{v_j\in B} E_G(v_j)\setminus \{uv_j,u_iv_j\}\bigg|\leq (t-1)(b-2).
\end{align}
by \eqref{3.13}.

If there is one edge $e''\in D_2(e')\cap S_4$ incidence with some vertex in $\tilde{B}$, then $e''=v_jw'$ for some $v_j\in \tilde{B}$ ($1\leq j\leq a-1$) and $w'\neq u,u_i$, and we further have $ww'\in E$, i.e., $w'\in N_G(v_j)\cap N_G(w)$. 
Hence
\begin{align}
    D_2(e')\cap S_4\cap  \bigg(\bigcup_{v_j\in \tilde{B}} E_G(v_j)\setminus \{uv_j\}\bigg)
    \notag &=\big\{v_jw'~|~v_j\in\tilde{B}, w'\in N_G(v_j)\cap N_G(w)\big\}\\
   \label{case2.3} &=\big\{v_jw'~|~v_j\in N_G(u)\setminus (B\cup \{v\}), w'\in N_G(v_j)\cap N_G(w)\big\}
\end{align}
Since $G$ is $K_{2,t+1}$-free, $|N_G(v_j)\cap N_G(w)|\leq t$ for every $v_j\in N_G(u)\setminus (B\cup \{v\})$. Hence 
\begin{align}\label{case2.5}
    \bigg|D_2(e')\cap S_4\cap  \bigg(\bigcup_{v_j\in \tilde{B}} E_G(v_j)\setminus \{uv_j\}\bigg)\bigg|\leq t(a-1)
\end{align}
by \eqref{case2.3}.
According to \eqref{case2.4} and \eqref{case2.5}, we conclude
\begin{align}\label{S4}
   \notag |D_2(e')\cap S_4|&=\bigg| D_2(e')\cap S_4\cap  \bigg(\bigcup_{v_j\in B} E_G(v_j)\setminus \{uv_j\}\bigg)\bigg|+\bigg|D_2(e')\cap S_4\cap  \bigg(\bigcup_{v_j\in \tilde{B}} E_G(v_j)\setminus \{uv_j\}\bigg)\bigg|\\
    &\leq (a+b-3)t-b+2.
\end{align}

Combining \eqref{case2.1}, \eqref{S2S3}, and \eqref{S4} together, we obtain
\begin{align*}
    \zeta(e',e)=|D_2(e')\cap D_2(e)|&=|D_2(e')\cap S_1|+|D_2(e')\cap (S_2\cup S_3)|+|D_2(e')\cap S_4|\\
    &=|S_1|+|D_2(e')\cap (S_2\cup S_3)|+|D_2(e')\cap S_4|\\
    &\leq (b-1)+(2a-3)t-a+1+(a+b-3)t-b+2\\
    &=(3t-1)(a-2)+bt
\end{align*}
for every $e'\in S_3$ by \eqref{4partition}.
It follows
\begin{align}\label{cal-3}
    \sum_{e'\in S_3}\zeta(e',e)\leq (a-1)(b-1)\big((3t-1)(a-2)+bt\big)
\end{align}
and by symmetry we further have
\begin{align}\label{cal-4}
    \sum_{e'\in S_4}\zeta(e',e)\leq (a-1)(b-1)\big((3t-1)(b-2)+at\big).
\end{align}

Finally, we combine \eqref{cal-1}, \eqref{cal-2}, \eqref{cal-3}, and \eqref{cal-4} together and then obtain
\begin{align*}
   \sum_{e'\in D_2(e)}\zeta(e',e)&=\sum_{i=1}^4\sum_{e'\in S_i}\zeta(e',e)\\
   &=(b-1)\big((a+t-1)b-2t\big)+(a-1)\big((b+t-1)a-2t\big)\\
   &~~~~+(a-1)(b-1)\big((3t-1)(a-2)+bt\big)+(a-1)(b-1)\big((3t-1)(b-2)+at\big)\\
   &=(4k-3)tb^2+\bigg(4ta^2-(20t-4)a+13t-4\bigg)b-\bigg(3ta^2-(13t-4)a+8t-4\bigg)\\
   &=(4k-3)tb^2+o(b^2)
\end{align*}
as desired.
\end{proof}

A graph $G$ is \textit{$\sigma$-sparse} if for every vertex $v$ of $G$, the graph induced by $N_G(v)$ has at most $(1-\sigma)\binom{\Delta(G)}{2}$ edges.

\begin{thm}\label{lem:a}
Let $G$ be a $K_{2,t+1}$-free $(a,b)$-regular bipartite graph with $t<a$. 
For each number $\varepsilon_{\ref{lem:a}}>0$, there exists an integer $B_{\ref{lem:a}}$ such that if $b \geq B_{\ref{lem:a}}$ then 
$L(G)^2$ is 
\begin{align*}
   \bigg( 1-\frac{(4a-3)t}{(2a-1)^2}-\varepsilon_{\ref{lem:a}} \bigg)\texttt{-}sparse.
\end{align*}
\end{thm}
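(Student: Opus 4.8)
The plan is to reformulate the $\sigma$-sparsity of $L(G)^2$ in terms of the quantities $|D_2(e)|$ and $\zeta(e',e)$, and then to feed in Lemma~\ref{lem:counting-edge}. First I would record the local structure of $L(G)^2$. Since $\dist_G(e',e'')=\dist_{L(G)}(v_{e'},v_{e''})$, two vertices $v_{e'},v_{e''}$ are adjacent in $L(G)^2$ exactly when $\dist_G(e',e'')\le 2$; hence $N_{L(G)^2}(v_e)=\{v_{e'}~|~e'\in D_2(e)\}$, so that $\D_{L(G)^2}(v_e)=|D_2(e)|$ and $\Delta(L(G)^2)=\max_{e\in E}|D_2(e)|$. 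Likewise, for $e',e''\in D_2(e)$ we have $v_{e'}v_{e''}\in E(L(G)^2)$ iff $e''\in D_2(e')$, so the number of edges of the induced subgraph $L(G)^2[N_{L(G)^2}(v_e)]$ equals
\[
m_e:=\tfrac12\sum_{e'\in D_2(e)}|D_2(e')\cap D_2(e)|=\tfrac12\sum_{e'\in D_2(e)}\zeta(e',e).
\]
Thus $L(G)^2$ is $\sigma$-sparse if and only if $m_e\le(1-\sigma)\binom{\Delta(L(G)^2)}{2}$ for every edge $e$, and it remains to bound the numerator $m_e$ and the denominator $\binom{\Delta(L(G)^2)}{2}$.

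For the denominator I would pin $\Delta(L(G)^2)$ down to within an additive constant that depends only on $a$ and $t$. Using the partition $D_2(e)=S_1\cup S_2\cup S_3\cup S_4$ from \eqref{4partition}, one checks that $S_1$ and $S_2$ are disjoint from each other and from $S_3\cup S_4$, with $|S_1|=b-1$, $|S_2|=a-1$ and $|S_3|=|S_4|=(a-1)(b-1)$; and, writing $e=uv$, every edge of $S_3\cap S_4$ joins a vertex of $N_G(v)\setminus\{u\}$ to a vertex of $N_G(u)\setminus\{v\}$, so the $K_{2,t+1}$-freeness forces $|S_3\cap S_4|\le(a-1)(t-1)$. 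Consequently every edge $e$ satisfies
\[
(2a-1)b-a-(a-1)(t-1)\le|D_2(e)|\le(2a-1)b-a,
\]
so $\Delta(L(G)^2)=(2a-1)b-O(1)$ and hence $\binom{\Delta(L(G)^2)}{2}=\tfrac12(2a-1)^2b^2-O(b)=\tfrac12(2a-1)^2b^2\,(1-o(1))$, where all the error terms depend only on $a$ and $t$.

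For the numerator, Lemma~\ref{lem:counting-edge} gives $\sum_{e'\in D_2(e)}\zeta(e',e)\le(4a-3)tb^2+o(b^2)$ with error depending only on $a,t$, so $m_e\le\tfrac12\big((4a-3)tb^2+o(b^2)\big)$ and therefore
\[
\frac{m_e}{\binom{\Delta(L(G)^2)}{2}}\le\frac{(4a-3)tb^2+o(b^2)}{(2a-1)^2b^2\,(1-o(1))}=\frac{(4a-3)t}{(2a-1)^2}+o(1),
\]
the last $o(1)$ tending to $0$ as $b\to\infty$ and depending only on $a,t$. Choosing $B_{\ref{lem:a}}$ large enough that this $o(1)$ stays below $\varepsilon_{\ref{lem:a}}$ for all $b\ge B_{\ref{lem:a}}$ gives $m_e\le\big(\tfrac{(4a-3)t}{(2a-1)^2}+\varepsilon_{\ref{lem:a}}\big)\binom{\Delta(L(G)^2)}{2}$ for every edge $e$, \ie $L(G)^2$ is $\big(1-\tfrac{(4a-3)t}{(2a-1)^2}-\varepsilon_{\ref{lem:a}}\big)$-sparse; the hypothesis $t<a$ guarantees $\tfrac{(4a-3)t}{(2a-1)^2}<1$, so this really is a positive sparsity.

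I expect the delicate point to be the denominator rather than the numerator. The bound $|D_2(e)|\le(2a-1)b$ is immediate, but to obtain the stated constant one must also establish $|D_2(e)|\ge(2a-1)b-O(1)$, \ie that the two ``distance-two branches'' $S_3$ and $S_4$ overlap in only $O(1)$ edges; this is exactly the place where $K_{2,t+1}$-freeness is essential, since with only $|S_3\cap S_4|=O(b)$ one would be pushed to the strictly weaker constant $\tfrac{(4a-3)t}{(2a-t)^2}$. Everything else is routine bookkeeping: one simply has to check that all the $o(\cdot)$ error terms are uniform over the edges $e$ and depend only on $a$ and $t$, so that a single threshold $B_{\ref{lem:a}}$ works.
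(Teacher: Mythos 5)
Your proof is correct and follows essentially the same route as the paper's: the neighbourhood edge count of $v_e$ in $L(G)^2$ is $\tfrac12\sum_{e'\in D_2(e)}\zeta(e',e)\le\tfrac12\big((4a-3)tb^2+o(b^2)\big)$ by Lemma~\ref{lem:counting-edge}, while $\binom{\Delta(L(G)^2)}{2}=\tfrac12(2a-1)^2b^2+O(b)$, and one compares the two for large $b$. You are in fact slightly more careful than the paper, which asserts $\Delta(L(G)^2)=2ab-a-b$ exactly, whereas you correctly observe that this may drop by the overlap $|S_3\cap S_4|\le(a-1)(t-1)$ --- an $O(1)$ discrepancy that is harmless for the asymptotics but worth having pinned down.
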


\begin{proof}
Since $G$ is an $(a,b)$-regular bipartite graph, $\Delta(L(G)^2)=(b-1)a+(a-1)b=2ab-a-b$.
For each vertex $v_e$ of $L(G)^2$, the graph induced by $N_{L(G)^2}(v_e)$ has at most $\frac{1}{2}f(a,b,t)$ edges by Lemma \ref{lem:counting-edge}.

Let 
\begin{align*}
  f(a,b,t)&=(4a-3)tb^2+o(b^2),\\
  g(a,b,t)&=2\binom{2ab-a-b}{2}=(2a-1)^2b^2-(4a^2-1)b+a(a+1).
\end{align*}
Observe $g(a,b,t)=(2a-1)^2b^2+o(b^2)$, so
\begin{align*}
 \lim\limits_{b\to+\infty} \frac{f(a,b,t)}{g(a,b,t)}=\frac{(4a-3)tb^2+o(b^2)}{(2a-1)^2b^2+o(b^2)}=\frac{(4a-3)t}{(2a-1)^2},
\end{align*}

For any small number $\varepsilon_{\ref{lem:a}}>0$, 
there exists an integer $B_{\ref{lem:a}}$ such that if $b\geq B_{\ref{lem:a}}$ then
\begin{align*}
    \bigg|\frac{f(a,b,t)}{g(a,b,t)}-\frac{(4a-3)t}{(2a-1)^2}\bigg|<\varepsilon_{\ref{lem:a}}.
\end{align*}
This follows
\begin{align*}\label{eq-bound}
    \frac{1}{2}f(a,b,t)&<\frac{1}{2}\bigg(\frac{(4a-3)t}{(2a-1)^2}+\varepsilon_{\ref{lem:a}}\bigg)g(a,b,t)\\
    &=
    \bigg(\frac{(4a-3)t}{(2a-1)^2}+\varepsilon_{\ref{lem:a}}\bigg)\binom{2ab-a-b}{2}\\
    &=\bigg(\frac{(4a-3)t}{(2a-1)^2}+\varepsilon_{\ref{lem:a}}\bigg)\binom{\Delta(L(G)^2)}{2}
\end{align*}
and thus
$L(G)^2$ is 
   $\bigg( 1-\frac{(4a-3)t}{(2a-1)^2}-\varepsilon_{\ref{lem:a}} \bigg)$-sparse.
\end{proof}

\noindent \textbf{Remark:} For a $K_{2,t+1}$-free $(a,b)$-regular bipartite graph $G$, we would assume $t<a$ or $t<b$, for otherwise $G$ does not contain a subgraph isomorphic to $K_{2,t+1}$ and thus the condition of $K_{2,t+1}$-free would be vacuous, and what is worse, $G$ is possible to be the complete bipartite graph $K_{a,b}$ and then $L(G)^2$ cannot be $\sigma$-sparse for any $\sigma<1$. This is indeed the reason why we assume $t<a$ in the statement of Lemma \ref{lem:a}.

\section{Proof of Theorem \ref{thm:limits}}\label{sec:4}

In this section we complete the proof of Theorem \ref{thm:limits}.

\begin{lem}\label{lem:newbound}\cite{Hurley2021135}
For each  $\varepsilon_{\ref{lem:newbound}}>0$ and $0\leq \sigma\leq 1$, there exists an integer $B_{\ref{lem:newbound}}$ such that
\[\chi(G)\leq (1-\sigma/2+\sigma^{3/2}/6+\varepsilon_{\ref{lem:newbound}})\Delta(G)\] for any $\sigma$-sparse graph $G$ with $\Delta(G)\geq B_{\ref{lem:newbound}}$.
\end{lem}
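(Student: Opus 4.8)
Since Lemma~\ref{lem:newbound} is exactly the kind of bound produced by the semi-random (\emph{nibble}) method for locally sparse graphs, the plan is to reconstruct it along those lines; this is the route of the cited paper \cite{Hurley2021135}, refining the wasteful-coloring procedure of Molloy and Reed \cite{Molloy1997103}. Write $\Delta=\Delta(G)$ and fix the target number of colors $k=\lceil(1-\sigma/2+\sigma^{3/2}/6+\varepsilon_{\ref{lem:newbound}})\Delta\rceil$. I would build the coloring in a bounded number of rounds, maintaining for each still-uncolored vertex $v$ a \emph{palette} $L(v)$ of colors not yet used on a permanently colored neighbor, together with its \emph{uncolored degree} $d(v)$. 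The invariant to propagate is that $|L(v)|-d(v)$ stays bounded below by a fixed positive fraction of $\Delta$ throughout; once $d(v)<|L(v)|$ holds for every remaining vertex, the leftover graph is finished greedily.

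The heart of the argument is a single round. Each uncolored vertex independently activates with a suitable probability and, if active, chooses a color uniformly from its current palette; it keeps the color (becoming permanent) only if no neighbor keeps the same color. The key point is that two neighbors of $v$ can keep the same color only if they are non-adjacent, and each such monochromatic coincidence removes only one color from $L(v)$ rather than two; $\sigma$-sparsity guarantees that $N(v)$ contains many non-adjacent pairs (few induced edges), so the expected number of coincidences is large and the palette shrinks strictly more slowly than the uncolored degree. Carrying out the expectation computation gives a leading palette saving proportional to $\sigma/2$; to pin down the exact second-order term $+\sigma^{3/2}/6$ I would use the refined weighted activation of \cite{Hurley2021135}, assigning colors nonuniform selection probabilities and optimizing this extra freedom, the optimum yielding the $3/2$-power correction.

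Next I would upgrade these expectations to statements holding simultaneously at every vertex. Both $|L(v)|$ and $d(v)$ after a round are functions of the independent per-vertex color choices with bounded effect and small certificate size, so Talagrand's inequality gives exponential concentration around their means; the Lov\'asz Local Lemma, applied to the bad events that some vertex deviates, then yields a partial coloring in which every uncolored vertex meets the per-round targets. Iterating the round a bounded number of times drives the uncolored degree below the palette size while preserving the invariant, after which the greedy finish completes a proper $k$-coloring. Choosing the activation probability and number of rounds appropriately, the cumulative loss is absorbed into $\varepsilon_{\ref{lem:newbound}}$, and the threshold $B_{\ref{lem:newbound}}$ is whatever makes the concentration and local-lemma conditions valid.

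The main obstacle is the exact constant in the second-order term. The naive uniform procedure only delivers the first-order bound $1-\sigma/2$ together with a weaker correction, so extracting $+\sigma^{3/2}/6$ forces the weighted procedure and a genuine optimization of the selection weights, carried out uniformly over all $0\le\sigma\le1$ so that the single threshold $B_{\ref{lem:newbound}}$ and error $\varepsilon_{\ref{lem:newbound}}$ work across the whole range. A secondary difficulty is that the nonuniform weights introduce correlations that complicate the Talagrand and local-lemma bookkeeping, and one must verify that the invariant is genuinely self-sustaining under iteration rather than degrading; these are the points where the detailed work of \cite{Hurley2021135} is needed.
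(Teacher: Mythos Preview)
The paper does not prove Lemma~\ref{lem:newbound} at all: it is quoted verbatim from Hurley, de Joannis de Verclos, and Kang~\cite{Hurley2021135} and used as a black box in the proof of Lemma~\ref{thm:main-1}. There is therefore no ``paper's own proof'' to compare against. Your proposal is a reasonable high-level outline of the semi-random method as developed in the cited source, but for the purposes of the present paper the correct ``proof'' is simply the citation.
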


\begin{lem}\label{thm:main-1}
Let $G$ be a $K_{2,t+1}$-free $(a,b)$-regular bipartite graph with $t<a$.
For each  $\varepsilon_{\ref{thm:main-1}}>0$, there exists an integer $B_{\ref{thm:main-1}}$ such that
if  $b\geq B_{\ref{thm:main-1}}$ then
\[\chi(L(G)^2)\leq (Z(a,t)+\varepsilon_{\ref{thm:main-1}})\Delta(L(G)^2),\]
where 
\begin{align*}
    Z(a,t)=\frac{1}{2}\bigg(1+\frac{(4a-3)t}{(2a-1)^2}\bigg)+\frac{1}{6}\bigg(1-\frac{(4a-3)t}{(2a-1)^2}\bigg)^{3/2}.
\end{align*}
\end{lem}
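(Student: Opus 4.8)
The plan is to combine the sparsity estimate of Theorem~\ref{lem:a} with the general chromatic-number bound of Lemma~\ref{lem:newbound}, applied to the graph $G' = L(G)^2$. First I would fix $\varepsilon_{\ref{thm:main-1}}>0$ and set $\sigma_0 = \frac{(4a-3)t}{(2a-1)^2}$, which is the ``extremal'' sparsity parameter appearing in both lemmas; note $0 \le \sigma_0 < 1$ since $t < a$ (so that the bracketed quantity is a legitimate sparsity exponent and $G'$ is not complete). The function $\sigma \mapsto 1 - \sigma/2 + \sigma^{3/2}/6$ is continuous (indeed decreasing) on $[0,1]$, so I would choose a small $\varepsilon_{\ref{lem:a}}>0$ such that the bound obtained by feeding $\sigma = \sigma_0 - \varepsilon_{\ref{lem:a}}$ into Lemma~\ref{lem:newbound} differs from $Z(a,t)$ by less than $\varepsilon_{\ref{thm:main-1}}/2$; here I would use that $Z(a,t) = 1 - \sigma_0/2 + \sigma_0^{3/2}/6$ by definition. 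Then set $\varepsilon_{\ref{lem:newbound}} = \varepsilon_{\ref{thm:main-1}}/2$.

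Next I would invoke Theorem~\ref{lem:a} with this $\varepsilon_{\ref{lem:a}}$ to get an integer $B_{\ref{lem:a}}$ so that for $b \ge B_{\ref{lem:a}}$ the graph $L(G)^2$ is $(1 - \sigma_0 - \varepsilon_{\ref{lem:a}})$-sparse, i.e.\ $\sigma$-sparse with $\sigma = \sigma_0 - \varepsilon_{\ref{lem:a}}$ (being $\sigma$-sparse for a larger $\sigma$ is a stronger condition, so this matches). I would then invoke Lemma~\ref{lem:newbound} with this $\sigma$ and with $\varepsilon_{\ref{lem:newbound}}$, obtaining an integer $B_{\ref{lem:newbound}}$ such that whenever $\Delta(L(G)^2) \ge B_{\ref{lem:newbound}}$ one has $\chi(L(G)^2) \le (1 - \sigma/2 + \sigma^{3/2}/6 + \varepsilon_{\ref{lem:newbound}})\Delta(L(G)^2)$. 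Since $\Delta(L(G)^2) = 2ab - a - b$ grows with $b$, there is an integer $B'$ with $\Delta(L(G)^2) \ge B_{\ref{lem:newbound}}$ whenever $b \ge B'$; I would set $B_{\ref{thm:main-1}} = \max\{B_{\ref{lem:a}}, B'\}$.

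Finally, for $b \ge B_{\ref{thm:main-1}}$ I would chain the two inequalities: $\chi(L(G)^2) \le (1 - \sigma/2 + \sigma^{3/2}/6 + \varepsilon_{\ref{lem:newbound}})\Delta(L(G)^2)$, and by the choice of $\varepsilon_{\ref{lem:a}}$ the coefficient $1 - \sigma/2 + \sigma^{3/2}/6$ is within $\varepsilon_{\ref{thm:main-1}}/2$ of $Z(a,t)$, while $\varepsilon_{\ref{lem:newbound}} = \varepsilon_{\ref{thm:main-1}}/2$; adding these gives the coefficient $\le Z(a,t) + \varepsilon_{\ref{thm:main-1}}$, as required.

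The main obstacle, such as it is, is purely bookkeeping: managing the cascade of $\varepsilon$'s and threshold integers so the final bound is clean, and in particular being careful about the direction of the $\sigma$-sparsity condition (a $\sigma$-sparse graph is also $\sigma'$-sparse for $\sigma' \le \sigma$, so Theorem~\ref{lem:a} delivers exactly the sparsity parameter needed for Lemma~\ref{lem:newbound}) and about translating a lower bound on $b$ into the lower bound on $\Delta(L(G)^2)$ that Lemma~\ref{lem:newbound} demands. No genuinely new idea is needed beyond the continuity of $\sigma \mapsto 1-\sigma/2+\sigma^{3/2}/6$ and the two cited results.
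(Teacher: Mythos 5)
Your overall strategy is exactly the paper's: apply Theorem \ref{lem:a} to obtain sparsity of $L(G)^2$, feed that into Lemma \ref{lem:newbound}, and manage the two $\varepsilon$'s and thresholds using the fact that $\Delta(L(G)^2)=2ab-a-b$ grows with $b$. However, as written your argument contains a sign error in the sparsity parameter. Theorem \ref{lem:a} says $L(G)^2$ is $\bigl(1-\tfrac{(4a-3)t}{(2a-1)^2}-\varepsilon_{\ref{lem:a}}\bigr)$-sparse, i.e.\ $\sigma$-sparse with $\sigma=1-\sigma_0-\varepsilon_{\ref{lem:a}}$ in your notation, \emph{not} with $\sigma=\sigma_0-\varepsilon_{\ref{lem:a}}$ as you assert; and correspondingly $Z(a,t)$ equals $1-\sigma/2+\sigma^{3/2}/6$ evaluated at $\sigma=1-\sigma_0$, not at $\sigma=\sigma_0$ (check: $1-(1-\sigma_0)/2+(1-\sigma_0)^{3/2}/6=\tfrac12(1+\sigma_0)+\tfrac16(1-\sigma_0)^{3/2}=Z(a,t)$, whereas $1-\sigma_0/2+\sigma_0^{3/2}/6$ is a different quantity). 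Taken literally, the step ``feed $\sigma=\sigma_0-\varepsilon_{\ref{lem:a}}$ into Lemma \ref{lem:newbound}'' is not justified by the sparsity you actually established, since being $\sigma$-sparse for larger $\sigma$ is the stronger condition and $\sigma_0$ can exceed $1-\sigma_0$ (e.g.\ for $t$ close to $a$, $\sigma_0$ is close to $1$); and your stated identity for $Z(a,t)$ is false.

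The two slips compensate: replacing $\sigma_0$ by $1-\sigma_0$ in both places turns your argument into the paper's proof, so the skeleton is right and only the bookkeeping needs correcting. Note also that the continuity appeal can be avoided: with $\sigma=1-\sigma_0-\varepsilon$ one has $1-\sigma/2=\tfrac12(1+\sigma_0)+\varepsilon/2$ and $\sigma^{3/2}\le(1-\sigma_0)^{3/2}$, hence $1-\sigma/2+\sigma^{3/2}/6\le Z(a,t)+\varepsilon/2$ directly; this is what the paper does, taking $\varepsilon_{\ref{lem:a}}=\varepsilon_{\ref{thm:main-1}}$ and $\varepsilon_{\ref{lem:newbound}}=\varepsilon_{\ref{thm:main-1}}/2$ without any need to shrink $\varepsilon_{\ref{lem:a}}$.
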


\begin{proof}
Let 
\begin{align*}
   \sigma & = 1-\frac{(4a-3)t}{(2a-1)^2}-\varepsilon_{\ref{thm:main-1}}
\end{align*}
Now $1-\sigma/2+\sigma^{3/2}/6\leq Z(a,t)+\varepsilon_{\ref{thm:main-1}}/2$.

Let $B_{\ref{lem:a}}$ and $B_{\ref{lem:newbound}}$ be the integers satisfying Lemmas \ref{lem:a} and \ref{lem:newbound} where we input $\varepsilon_{\ref{lem:a}}$ and $\varepsilon_{\ref{lem:newbound}}$ by $\varepsilon_{\ref{thm:main-1}}$ and $\frac{1}{2}\varepsilon_{\ref{thm:main-1}}$, respectively.
Let $B_{\ref{thm:main-1}}=\max\{B_{\ref{lem:a}},B_{\ref{lem:newbound}}+1\}$ and assume $b>B_{\ref{thm:main-1}}$.

Since $b>B_{\ref{thm:main-1}}\geq B_{\ref{lem:a}}$, $L(G)^2$ is $\sigma$-sparse by Lemma \ref{lem:a}.
Since $\Delta(L(G)^2)=2ab-a-b\geq b-1>B_{\ref{thm:main-1}}-1\geq B_{\ref{lem:newbound}}$,
\begin{align*}
    \chi(L(G)^2)&\leq (1-\sigma/2+\sigma^{3/2}/6+\varepsilon_{\ref{thm:main-1}}/2)\Delta(L(G)^2)\\
    &\leq (Z(a,t)+\varepsilon_{\ref{thm:main-1}}/2+\varepsilon_{\ref{thm:main-1}}/2)\Delta(L(G)^2)\\
&=(Z(a,t)+\varepsilon_{\ref{thm:main-1}})\Delta(L(G)^2)
\end{align*}
by Lemma \ref{lem:newbound}.
\end{proof}

We are now ready to complete the proof of Theorem \ref{thm:limits} by the following theorem.

\begin{thm}\label{maintheorem}
Let $\mathcal{H}$ be a $t$-quasi-linear hypergraph with $t<r(\mathcal{H})=k$. If $\Delta(\mathcal{H})$ is sufficiently large, then
\begin{align*}
    \chii{\mathcal{H}}\leq W(k,t)r(\mathcal{H})\Delta(\mathcal{H}),
\end{align*}
where
\begin{align*}
    W(k,t)=\bigg(1+\frac{(4k-3)t}{(2k-1)^2}\bigg)+\frac{1}{3}\bigg(1-\frac{(4k-3)t}{(2k-1)^2}\bigg)^{3/2}.
\end{align*}
In particular,
\begin{align*}
    \chii{\mathcal{H}}\leq \bigg(\frac{4}{3}+o(1)\bigg)r(\mathcal{H})\Delta(\mathcal{H}).
\end{align*}
if $t<<r(\mathcal{H})$.
\end{thm}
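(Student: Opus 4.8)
The plan is to assemble Theorem \ref{maintheorem} entirely from the machinery established in the preceding sections, so the proof is essentially a bookkeeping argument with no new combinatorial content. First I would invoke Proposition \ref{prop:supergraph} to embed $\mathcal{H}$ into an $r(\mathcal{H})$-uniform $\Delta(\mathcal{H})$-regular $t$-quasi-linear hypergraph $\mathcal{H}^*$. Since $\mathcal{H}\subseteq \mathcal{H}^*$, any proper incidence coloring of $\mathcal{H}^*$ restricts to one of $\mathcal{H}$, hence $\chii{\mathcal{H}}\leq \chii{\mathcal{H}^*}$. By Proposition \ref{prop:regular-bipartite}, $B(\mathcal{H}^*)$ is a $K_{2,t+1}$-free $(k,\Delta(\mathcal{H}))$-regular bipartite graph, where $k=r(\mathcal{H})$; and by the first equality of \eqref{eq:relationship}, $\chii{\mathcal{H}^*}=\chi'_s(B(\mathcal{H}^*))=\chi(L(B(\mathcal{H}^*))^2)$. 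So it suffices to bound $\chi(L(G)^2)$ for $G=B(\mathcal{H}^*)$, an object covered by Lemma \ref{thm:main-1}.

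Next I would apply Lemma \ref{thm:main-1} with $a=k$ and $b=\Delta(\mathcal{H})$. The hypothesis $t<k$ is exactly the standing assumption $t<r(\mathcal{H})=k$, and "$\Delta(\mathcal{H})$ sufficiently large" supplies $b\geq B_{\ref{thm:main-1}}$ once $\varepsilon_{\ref{thm:main-1}}$ is fixed. This yields $\chi(L(G)^2)\leq (Z(k,t)+\varepsilon_{\ref{thm:main-1}})\Delta(L(G)^2)$, where $\Delta(L(G)^2)=2k\Delta(\mathcal{H})-k-\Delta(\mathcal{H})<2k\Delta(\mathcal{H})$. Combining, $\chii{\mathcal{H}}<2(Z(k,t)+\varepsilon_{\ref{thm:main-1}})k\Delta(\mathcal{H})$. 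The stated constant is $W(k,t)=2Z(k,t)$: indeed $2Z(k,t)=\big(1+\tfrac{(4k-3)t}{(2k-1)^2}\big)+\tfrac{1}{3}\big(1-\tfrac{(4k-3)t}{(2k-1)^2}\big)^{3/2}$, matching the definition in the theorem. Choosing $\varepsilon_{\ref{thm:main-1}}$ small (say absorbed into the $o(1)$ or into a slightly enlarged $W$) gives $\chii{\mathcal{H}}\leq W(k,t)\,r(\mathcal{H})\Delta(\mathcal{H})$ for all sufficiently large $\Delta(\mathcal{H})$.

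Finally, for the "in particular" clause I would analyze the behavior of $W(k,t)$ as $t/k\to 0$. Writing $\rho=\tfrac{(4k-3)t}{(2k-1)^2}$, note $\rho=\Theta(t/k)\to 0$ when $t<<k$, so $W(k,t)=(1+\rho)+\tfrac{1}{3}(1-\rho)^{3/2}\to 1+\tfrac{1}{3}=\tfrac{4}{3}$. Hence $W(k,t)=\tfrac{4}{3}+o(1)$ in the regime $t<<r(\mathcal{H})$, which delivers $\chii{\mathcal{H}}\leq(\tfrac{4}{3}+o(1))r(\mathcal{H})\Delta(\mathcal{H})$ and thereby also proves Theorem \ref{thm:limits}.

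I expect no genuine obstacle here, since the real work has been done in Lemma \ref{lem:counting-edge} (the sparsity count) and in citing Lemma \ref{lem:newbound}. The only point requiring care is the order of quantifiers: one must fix $\varepsilon_{\ref{thm:main-1}}>0$ first, obtain the threshold $B_{\ref{thm:main-1}}$, and only then declare $\Delta(\mathcal{H})$ "sufficiently large"; and one must verify the arithmetic identity $W(k,t)=2Z(k,t)$ together with the elementary limit $\rho\to 0\Rightarrow W\to 4/3$. Both are routine.
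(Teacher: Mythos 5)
Your proposal is correct and follows essentially the same route as the paper: embed $\mathcal{H}$ into the regular uniform supergraph $\mathcal{H}^*$ via Propositions \ref{prop:supergraph} and \ref{prop:regular-bipartite}, pass to $\chi(L(\mathcal{B}(\mathcal{H}^*))^2)$ via \eqref{eq:relationship}, apply Lemma \ref{thm:main-1}, and absorb the error term using $\Delta(L(G)^2)<2k\Delta(\mathcal{H})$. Your extra care about fixing $\varepsilon_{\ref{thm:main-1}}$ before declaring $\Delta(\mathcal{H})$ large, and the explicit check that $W(k,t)=2Z(k,t)\to 4/3$, match what the paper does implicitly.
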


\begin{proof}
By Propositions \ref{prop:regular-bipartite} and \ref{prop:supergraph}, there exists a $t$-quasi-linear hypergraph $\mathcal{H}^*$ containing $\mathcal{H}$ as a subhypergraph such that 
$\mathcal{B}(\mathcal{H}^*)$ is a $K_{2,t+1}$-free $(k,\Delta(\mathcal{H}))$-regular bipartite graph.
Therefore,
\begin{align*}
\chii{\mathcal{H}}&\leq \chii{\mathcal{H}^*}=\chi(L(\mathcal{B}(\mathcal{H}^*))^2)\\
&\leq (Z(k,t)+o(1))\Delta(L(\mathcal{B}(\mathcal{H}^*))^2)\\
&=(Z(k,t)+o(1))(2k\Delta(\mathcal{H})-k-\Delta(\mathcal{H}))\\
&\leq 2Z(k,t)k\Delta(\mathcal{H})\\
&=W(k,t)r(\mathcal{H})\Delta(\mathcal{H})
\end{align*}
by \eqref{eq:relationship} and by Lemma \ref{thm:main-1}.

It is easy to check that \[W(k,t)\xrightarrow[t=o(k)]{k\rightarrow +\infty} \frac{4}{3}.\]
Hence if $t<<r(\mathcal{H})$, then 
\begin{align*}
    \chii{\mathcal{H}}\leq \bigg(\frac{4}{3}+o(1)\bigg)r(\mathcal{H})\Delta(\mathcal{H}),
\end{align*}
as desired.
\end{proof}

\section{Discussions on Theorem \ref{thm:limits}}

Since linear hypergraphs are exactly $1$-quasi linear hypergraphs, we deduce the following from 
Theorem \ref{maintheorem}.

\begin{cor}\label{cor:new}
Let $\mathcal{H}$ be a linear hypergraph. If $\Delta(\mathcal{H})$ is sufficiently large, then 
\begin{align*}
    \chii{\mathcal{H}}\leq f(r(\mathcal{H}))\Delta(\mathcal{H}),
\end{align*}
where
$f(r(\mathcal{H}))=W(r(\mathcal{H}),1)r(\mathcal{H})$.
\end{cor}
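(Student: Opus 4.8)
The plan is to read off Corollary~\ref{cor:new} from Theorem~\ref{maintheorem} by specializing to $t=1$. As remarked immediately above the statement, a hypergraph is linear precisely when it is $1$-quasi-linear, so a linear hypergraph $\mathcal{H}$ is $t$-quasi-linear with $t=1$. Writing $k:=r(\mathcal{H})$, the conclusion of Theorem~\ref{maintheorem} then reads
\[
\chii{\mathcal{H}}\leq W(k,1)\,r(\mathcal{H})\,\Delta(\mathcal{H})=W(r(\mathcal{H}),1)\,r(\mathcal{H})\cdot\Delta(\mathcal{H})=f(r(\mathcal{H}))\,\Delta(\mathcal{H}),
\]
which is exactly the asserted bound. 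So the proof is a one-line reduction once the hypotheses of Theorem~\ref{maintheorem} are checked.

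The only thing to verify before invoking Theorem~\ref{maintheorem} is its hypothesis $t<r(\mathcal{H})$, i.e.\ $r(\mathcal{H})\geq 2$. This is exactly where the assumption that $\Delta(\mathcal{H})$ is sufficiently large is used: in a linear hypergraph all of whose edges are singletons, any two distinct edges are disjoint, so $\Delta(\mathcal{H})\leq 1$; hence once $\Delta(\mathcal{H})$ exceeds any fixed constant we must have $r(\mathcal{H})\geq 2>1=t$, and Theorem~\ref{maintheorem} (together with the $K_{2,t+1}$-free machinery it relies on, such as Lemma~\ref{lem:a}) applies non-vacuously. I would also record the asymptotics for completeness: since $(4k-3)/(2k-1)^2\to 0$ as $k\to\infty$, we have $W(k,1)\to 1+\tfrac13=\tfrac43$, so $f(r(\mathcal{H}))=(\tfrac43+o(1))\,r(\mathcal{H})$, which recovers Theorem~\ref{thm:limits} restricted to linear hypergraphs.

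Since this is a direct corollary, there is no substantial obstacle; the main (and essentially only) point requiring a sentence of care is the reduction of the hypothesis just described — ensuring $r(\mathcal{H})\geq 2$ so that the inequality $t<r(\mathcal{H})$ needed by Theorem~\ref{maintheorem} holds — after which the bound follows by plugging $t=1$ into $W(k,t)$.
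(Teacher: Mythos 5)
Your proof is correct and follows exactly the paper's route: the corollary is obtained by specializing Theorem \ref{maintheorem} to $t=1$, using that linear hypergraphs are precisely the $1$-quasi-linear ones. Your extra remark that large $\Delta(\mathcal{H})$ forces $r(\mathcal{H})\geq 2$, so the hypothesis $t<r(\mathcal{H})$ is non-vacuous, is a small point the paper handles only by implicitly assuming $r(\mathcal{H})\geq 3$ afterwards, but it does not change the argument.
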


We naturally assume $r(\mathcal{H})\geq 3$ in Corollary \ref{cor:new}. 
Since $W(r(\mathcal{H}),1)$ is a deceasing function of $r(\mathcal{H})$,
$$f(r(\mathcal{H}))\leq W(3,1)r(\mathcal{H})\leq 1.531r(\mathcal{H}).$$
We write down this result as a corollary.

\begin{cor}\label{cor:new1}
Let $\mathcal{H}$ be a linear hypergraph with $r(\mathcal{H})\geq 3$. If $\Delta(\mathcal{H})$ is sufficiently large, then 
\begin{align*}
    \chii{\mathcal{H}}\leq 1.531r(\mathcal{H})\Delta(\mathcal{H}).
\end{align*}
\end{cor}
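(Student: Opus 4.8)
The statement follows by feeding the bound of Corollary~\ref{cor:new} into a short monotonicity analysis of the coefficient $W(\cdot,1)$. Writing $f(r)=W(r,1)\,r$ as in Corollary~\ref{cor:new}, it suffices to prove $W(r,1)\le 1.531$ for every integer $r\ge 3$; indeed, once this is known, Corollary~\ref{cor:new} gives $\chi_I(\mathcal{H})\le f(r(\mathcal{H}))\Delta(\mathcal{H})=W(r(\mathcal{H}),1)\,r(\mathcal{H})\,\Delta(\mathcal{H})\le 1.531\,r(\mathcal{H})\,\Delta(\mathcal{H})$ whenever $\Delta(\mathcal{H})$ is large enough.

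To establish $W(r,1)\le 1.531$ I would show that $W(\cdot,1)$ is decreasing on $[3,\infty)$ and then evaluate at $r=3$. Introduce the abbreviation $\theta(k)=\dfrac{4k-3}{(2k-1)^2}$, so that
\[
W(k,1)=1+\theta(k)+\tfrac13\bigl(1-\theta(k)\bigr)^{3/2}=h\bigl(\theta(k)\bigr),\qquad h(\theta):=1+\theta+\tfrac13(1-\theta)^{3/2}.
\]
A direct differentiation gives $\theta'(k)=\dfrac{8(1-k)}{(2k-1)^3}<0$ for $k\ge 2$, so $\theta$ is strictly decreasing there; moreover $0<\theta(k)\le \theta(3)=9/25<1$ for $k\ge 3$, so $h$ is only ever applied on the interval $(0,1)$. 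On that interval $h'(\theta)=1-\tfrac12\sqrt{1-\theta}\ge \tfrac12>0$, hence $h$ is strictly increasing. Composing the decreasing map $\theta$ with the increasing map $h$ shows that $k\mapsto W(k,1)$ is strictly decreasing for $k\ge 3$.

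Consequently $W(r,1)\le W(3,1)$ for every integer $r\ge 3$, and
\[
W(3,1)=1+\tfrac{9}{25}+\tfrac13\Bigl(\tfrac{16}{25}\Bigr)^{3/2}=1+\tfrac{9}{25}+\tfrac{64}{375}=\tfrac{574}{375}<1.531 .
\]
Plugging this into the inequality from the first paragraph yields the claim. There is essentially no obstacle here: all of the substantive work — reducing to a $K_{2,t+1}$-free biregular bipartite graph via Propositions~\ref{prop:regular-bipartite} and~\ref{prop:supergraph}, the neighbourhood-sparsity count of Lemma~\ref{lem:counting-edge}, and the invocation of Lemma~\ref{lem:newbound} — is already contained in Theorem~\ref{maintheorem} and Corollary~\ref{cor:new}; what remains is only the two elementary sign checks for $\theta'$ and $h'$ together with the single numerical evaluation of $W(3,1)$.
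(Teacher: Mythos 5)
Your proposal is correct and follows essentially the same route as the paper: deduce from Corollary~\ref{cor:new} that it suffices to bound $W(r,1)$ for $r\ge 3$, observe that $W(\cdot,1)$ is decreasing, and evaluate $W(3,1)=\tfrac{574}{375}<1.531$. The only difference is that you supply the monotonicity verification (the sign checks for $\theta'$ and $h'$) that the paper merely asserts, and your computations check out.
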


Mahdian \cite{Mahdian2000357} showed
$\chi'_s(G)\leq (2+o(1))\Delta(G)^2/\log\Delta(G)$ for $K_{2,2}$-free bipartite graphs $G$ with sufficiently large $\Delta(G)$,
and the bound is asymptotically best possible. This can be used to give another upper bound for the incidence chromatic number of linear hypergraphs.

Let $\mathcal{H}$ be a linear hypergraph. One can easily check that  $\mathcal{B}(\mathcal{H})$ is a $K_{2,2}$-free bipartite graph.
Hence applying \eqref{eq:relationship} we obtain the following.

\begin{cor}\label{cor:new2}
Let $\mathcal{H}$ be a linear hypergraph. If $\varrho(\mathcal{H}):=\max\{r(\mathcal{H}),\Delta(\mathcal{H})\}$ is sufficiently large, then 
\begin{align*}
    \chii{\mathcal{H}}\leq (2+o(1))\varrho(\mathcal{H})^2/\log\varrho(\mathcal{H}).
\end{align*}
\end{cor}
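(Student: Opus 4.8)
The plan is to derive Corollary~\ref{cor:new2} by combining the identity $\chii{\mathcal{H}}=\chi'_s(\mathcal{B}(\mathcal{H}))$ from \eqref{eq:relationship} with Mahdian's bound on the strong chromatic index of $K_{2,2}$-free bipartite graphs, so the only real content is checking that $\mathcal{B}(\mathcal{H})$ is itself $K_{2,2}$-free and that its maximum degree is exactly $\varrho(\mathcal{H})$. First I would verify that $\mathcal{B}(\mathcal{H})$ contains no $K_{2,2}$: a $K_{2,2}$ in the Levi graph would have to have one part in $X$ and one part in $S$, say vertices $x_1,x_2\in X$ and edges $s_1,s_2\in S$ with $x_i\in s_j$ for all $i,j$; then $\{x_1,x_2\}\subseteq s_1\cap s_2$, contradicting $|s_1\cap s_2|\leq 1$ for a linear hypergraph. (This is just the $t=1$ case of Proposition~\ref{prop:regular-bipartite}, without the regularity hypothesis.) Then I would note $\Delta(\mathcal{B}(\mathcal{H}))=\max\{r(\mathcal{H}),\Delta(\mathcal{H})\}=\varrho(\mathcal{H})$, exactly as recorded before \eqref{eqqqq1}, since vertices of $X$ have degree equal to their hypergraph degree (bounded by $\Delta(\mathcal{H})$) and vertices of $S$ have degree equal to their cardinality (bounded by $r(\mathcal{H})$).

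With these two facts in hand, I would invoke Mahdian's theorem \cite{Mahdian2000357}, which states $\chi'_s(G)\leq (2+o(1))\Delta(G)^2/\log\Delta(G)$ for $K_{2,2}$-free bipartite graphs $G$ with $\Delta(G)$ sufficiently large. Applying this to $G=\mathcal{B}(\mathcal{H})$, whose maximum degree is $\varrho(\mathcal{H})$, gives $\chi'_s(\mathcal{B}(\mathcal{H}))\leq (2+o(1))\varrho(\mathcal{H})^2/\log\varrho(\mathcal{H})$ once $\varrho(\mathcal{H})$ is large enough. Combining with $\chii{\mathcal{H}}=\chi'_s(\mathcal{B}(\mathcal{H}))$ yields the claimed inequality
\[
   \chii{\mathcal{H}}\leq (2+o(1))\varrho(\mathcal{H})^2/\log\varrho(\mathcal{H}),
\]
which is exactly the statement.

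Honestly, there is no substantive obstacle here: the whole argument is a two-line reduction once the $K_{2,2}$-freeness observation is made, and that observation is immediate from the definition of linear. If I wanted to be careful, the only point needing a word of justification is the passage from ``$\Delta(\mathcal{B}(\mathcal{H}))$ large'' to ``the $o(1)$ term is controlled,'' but since $o(1)$ here is taken as $\varrho(\mathcal{H})\to\infty$ and $\varrho(\mathcal{H})=\Delta(\mathcal{B}(\mathcal{H}))$, the two notions of ``sufficiently large'' coincide, so nothing extra is needed. Thus the proof is essentially: check $\mathcal{B}(\mathcal{H})$ is $K_{2,2}$-free and bipartite, identify its maximum degree with $\varrho(\mathcal{H})$, quote Mahdian, and apply \eqref{eq:relationship}.
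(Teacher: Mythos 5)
Your proposal is correct and follows essentially the same route as the paper: the authors likewise observe that the Levi graph of a linear hypergraph is a $K_{2,2}$-free bipartite graph with maximum degree $\varrho(\mathcal{H})$, and then apply Mahdian's bound together with \eqref{eq:relationship}. Nothing is missing.
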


Comparing Corollary \ref{cor:new1} with \ref{cor:new2}, one can see that the bound given by Corollary \ref{cor:new1} is better than the one given by 
Corollary \ref{cor:new2} provided 
$r(\mathcal{H})\leq 1.3\Delta(\mathcal{H})/\log \Delta(\mathcal{H})$.

In 1990, Faudree, Gy\'arf\'as, Schelp, and Tuza \cite{Faudree1990205} conjectured $\chi'_s(G)\leq \Delta(G)^2$ for every bipartite graph $G$. In 1993, Brualdi and Quinn Massey \cite{Brualdi199351} refined it and put forward the following

\begin{conj}\label{conj:bipartite}
$\chi'_s(G)\leq ab$ for every $(a,b)$-bipartite graph.
\end{conj}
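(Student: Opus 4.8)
This is the Brualdi--Quinn-Massey conjecture, which is open in general; what follows is the line of attack I would pursue, the reductions it allows, and the point at which it stalls.

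\textbf{Step 1: reduction to the regular biregular case.} As in Proposition~\ref{prop:supergraph}, any $(a,b)$-bipartite graph $G$ with parts $A$ (degrees at most $a$) and $B$ (degrees at most $b$) embeds in an $(a,b)$-\emph{regular} bipartite simple graph $G^{*}\supseteq G$: iteratively attach new vertices to the deficient ones and take disjoint copies to repair the resulting deficiencies, exactly as in Steps~1--2 of that proof but keeping the roles of $a$ and $b$ asymmetric (a routine degree-sequence argument makes this work). Since deleting edges only increases distances in the line graph, any strong edge colouring of $G^{*}$ restricts to one of $G$, so $\chi'_s(G)\le\chi'_s(G^{*})$ and it suffices to prove $\chi'_s(G^{*})\le ab$. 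For $(a,b)$-regular bipartite $G^{*}$ one has $\Delta(L(G^{*})^{2})=2ab-a-b$, so the goal $\chi(L(G^{*})^{2})\le ab$ is essentially to halve the greedy bound $\Delta(L(G^{*})^{2})+1=2ab-a-b+1$.

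\textbf{Step 2: the density attack and its obstruction.} The natural continuation is to feed $L(G^{*})^{2}$ to a colouring theorem for locally sparse graphs such as Lemma~\ref{lem:newbound}. The obstruction --- and the genuinely hard part --- is that Lemma~\ref{lem:counting-edge} and Theorem~\ref{lem:a} show the neighbourhood-density ratio of $L(G^{*})^{2}$ tends to $(4a-3)/(2a-1)^{2}$ even in the sparsest $K_{2,2}$-free case, a quantity bounded away from $1$; hence $L(G^{*})^{2}$ is never $\sigma$-sparse with $\sigma$ near $1$, and Lemma~\ref{lem:newbound} can only return a leading constant $Z(a,t)>\tfrac12$, giving $\chi(L(G^{*})^{2})\le 2Z(a,t)\,ab$ with $2Z(a,t)>1$. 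So the sparsity method of this paper provably cannot reach $ab$. Closing the remaining gap appears to need structural rather than probabilistic input: a near-optimal clique cover of $L(G^{*})^{2}$, or an iterative Kempe-chain recolouring exploiting the bipartiteness of $G^{*}$ (for example, the edges at a fixed $A$-vertex must be rainbow and see only $a-1$ further colour classes through each $B$-endpoint). I do not see how to carry this out, and I regard it as the main obstacle to a full proof.

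\textbf{Step 3: what can be secured now.} Pending the general case, \eqref{eq:relationship} lets one settle the conjecture for the bipartite graphs arising as Levi graphs of the hypergraphs studied here. If $G=\mathcal{B}(\mathcal{H})$ for an $\alpha$-acyclic linear hypergraph $\mathcal{H}$, so that $\{a,b\}=\{\Delta(\mathcal{H}),r(\mathcal{H})\}$, then Theorem~\ref{cort} gives $\chi'_s(G)=\chii{\mathcal{H}}\le\Delta(\mathcal{H})+r(\mathcal{H})-1=a+b-1\le ab$, the last inequality holding whenever $a,b\ge2$ since $(a-1)(b-1)\ge1$. More generally, for a $K_{2,t+1}$-free $(a,b)$-regular bipartite $G$ with $t<a$ and $b$ large, Theorem~\ref{maintheorem} yields $\chi'_s(G)\le 2Z(a,t)\,ab$, which beats the greedy bound $2ab$ and tends to $\tfrac43\,ab$ as $t=o(a)\to\infty$ --- genuine progress toward the conjecture, though still short of $ab$.
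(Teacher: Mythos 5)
The statement you were asked to address is Conjecture \ref{conj:bipartite}, which the paper itself presents as an open conjecture of Brualdi and Quinn Massey (known only for $a\le 3$) and does not prove; your assessment that it is open, and that the paper's local-sparsity machinery provably cannot reach the bound $ab$ because the density ratio $(4a-3)t/(2a-1)^2$ keeps $Z(a,t)$ above $1/2$, is accurate. Your Step 3 recovers exactly the paper's own partial result toward the conjecture: the bound $\chi'_s(G)\le(\tfrac{4}{3}+o(1))ab$ for $K_{2,t+1}$-free $(a,b)$-bipartite graphs, obtained precisely as the paper does by viewing $G$ as the Levi graph of a $t$-quasi-linear hypergraph and applying \eqref{eq:relationship} together with Theorem \ref{maintheorem}.
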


Nakprasit \cite{NAKPRASIT20083726} confirmed it for $a=2$.
Huang, Yu, and Zhou \cite{HUANG20171143} verified it for $a=3$ (there were some earlier partial results: Steger and Yu \cite{STEGER1993291} proved it for $a=b=3$, and Bensmail, Lagoutte, and Valicov \cite{BENSMAIL2016391}
proved $\chi'_s(G)\leq 4b$ for every $(3,b)$-bipartite graph $G$).
To our knowledge, whether Conjecture \ref{conj:bipartite} holds for $a=4$ is unknown.

Applying Theorem \ref{thm:limits} (or its detailed form Theorem \ref{maintheorem}), we 
obtain the following result towards Conjecture \ref{conj:bipartite}.

\begin{thm}
If $G$ is a $K_{2,t+1}$-free $(a,b)$-bipartite graph with $t<<a\leq b$, then 
\begin{align*}
    \chi'_s(G)\leq \bigg(\frac{4}{3}+o(1)\bigg)ab
\end{align*}
\end{thm}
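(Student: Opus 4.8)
The plan is to reduce the statement about $K_{2,t+1}$-free $(a,b)$-bipartite graphs to the hypergraph incidence coloring result via the Levi correspondence, running the construction of Proposition~\ref{prop:supergraph} in reverse. Given a $K_{2,t+1}$-free $(a,b)$-bipartite graph $G$ with bipartition $(U,V)$, where $\Delta(U)=a$ and $\Delta(V)=b$, I would first interpret $G$ as the Levi graph of a hypergraph $\mathcal{H}=(X,S)$: set $X=U$ and $S=V$, where each $v\in V$ becomes the edge $s_v=\{u\in U : uv\in E(G)\}$. Then $r(\mathcal{H})=\Delta(V)=b$ and $\Delta(\mathcal{H})=\Delta(U)=a$, so the roles of $a$ and $b$ are interchanged relative to the earlier sections, but since $a\le b$ this is harmless (one may instead take $X=V$, $S=U$ to keep $r(\mathcal{H})=a$ small — the choice should be made so that the final bound reads $\le(4/3+o(1))ab$ symmetrically).

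Next I would verify that $\mathcal{H}$ is $t$-quasi-linear. Both defining conditions translate directly into the $K_{2,t+1}$-freeness of $G$, exactly as in the proof of Proposition~\ref{prop:regular-bipartite}: if some pair of edges $s,s'\in S$ satisfied $|s\cap s'|\ge t+1$, the $t+1$ common vertices together with the two edges would give a $K_{2,t+1}$ in $G$ with the two-side in $V$; and if some pair of vertices $x,x'\in X$ lay in $t+1$ common edges, those $t+1$ edges together with $x,x'$ would give a $K_{2,t+1}$ with the two-side in $U$. The hypothesis $t\ll a\le b$ then gives $t\ll r(\mathcal{H})$ (with the convention $r(\mathcal{H})=b$; if $r(\mathcal{H})=a$ the hypothesis $t\ll a$ suffices directly). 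Since $\Delta(\mathcal{H})$ equals the other side degree, which is at least $a$ and tends to infinity, $\Delta(\mathcal{H})$ is sufficiently large whenever $a$ is.

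With these checks in place, I would apply Theorem~\ref{thm:limits} (or its detailed form Theorem~\ref{maintheorem}) to $\mathcal{H}$ to get $\chi_I(\mathcal{H})\le(4/3+o(1))\,r(\mathcal{H})\Delta(\mathcal{H}) = (4/3+o(1))\,ab$. Finally I would invoke Observation~\ref{lem:bijection} together with \eqref{eq:relationship}: since $G=\mathcal{B}(\mathcal{H})$, we have $\chi'_s(G)=\chi'_s(\mathcal{B}(\mathcal{H}))=\chi_I(\mathcal{H})$, and the desired bound $\chi'_s(G)\le(4/3+o(1))ab$ follows immediately.

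The only genuine subtlety — and the step I would be most careful about — is bookkeeping the asymmetry between $a$ and $b$. Theorem~\ref{maintheorem} is stated with $r(\mathcal{H})$ in both the hypothesis $t\ll r(\mathcal{H})$ and the conclusion, whereas here we have two different degree parameters; one must choose the Levi-graph encoding (which side becomes $X$ and which becomes $S$) so that the parameter playing the role of $r(\mathcal{H})$ is consistent in hypothesis and conclusion, and then observe that because the final bound $W(k,t)k\cdot\Delta$ with $k=r(\mathcal{H})$ equals the symmetric quantity $(4/3+o(1))ab$ in the limit $t=o(k)$, either choice yields the stated inequality. A secondary point is that Theorem~\ref{thm:limits} requires $\Delta(\mathcal{H})$ to be sufficiently large; since under our encoding $\Delta(\mathcal{H})$ is the larger of the two side-degrees (or at least $\ge a$), and the conclusion is an asymptotic one as $a,b\to\infty$, this is automatic. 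No new combinatorial input is needed beyond what is already proved.
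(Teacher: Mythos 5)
Your proposal is correct and is essentially the paper's own proof: encode $G$ as the Levi graph of a hypergraph $\mathcal{H}$, deduce $t$-quasi-linearity from $K_{2,t+1}$-freeness exactly as in Proposition \ref{prop:regular-bipartite}, and apply Theorem \ref{maintheorem} together with \eqref{eq:relationship}. The one point to pin down is the orientation: the paper takes $X=B$ and $S=\{N_G(u)~|~u\in A\}$ so that $r(\mathcal{H})=a$ and $\Delta(\mathcal{H})=b$, and you should commit to that choice rather than your initial one ($r(\mathcal{H})=b$, $\Delta(\mathcal{H})=a$), because the asymptotics underlying Theorem \ref{maintheorem} treat the rank as the fixed (or slowly growing) parameter and require the degree $\Delta(\mathcal{H})$ to be the large one, so the claim that ``either choice yields the stated inequality'' is not safe when $a\ll b$.
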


\begin{proof}
Let $A$ and $B$ be the bipartition of $G$ with $\Delta(A)=a$ and $\Delta(B)=b$.
Let $\mathcal{H}$ be a hypergraph $(X,S)$ such that
$X=B$ and $S=\{N_G(u)~|~u\in A\}$.
One can see that $r(\mathcal{H})=a$, $\Delta(\mathcal{H})=b$, and $G=\mathcal{B}(\mathcal{H})$.
Since $G$ is $K_{2,t+1}$-free, $\mathcal{H}$ is $t$-quasi-linear. 
Hence by \eqref{eq:relationship} and by Theorem \ref{thm:limits} (or Theorem \ref{maintheorem}),
$\chi'_s(G)=\chii{\mathcal{H}}\leq \big(\frac{4}{3}+o(1)\big)ab$.
\end{proof}

We can also apply Theorem \ref{maintheorem} to obtain certain results in the following example form:

\begin{center}
    \textit{$\chi'_s(G)\leq 6b$ for every $K_{2,2}$-free $(4,b)$-bipartite graph with sufficiently large $b$.}
\end{center}

\noindent We leave the contents and proofs of them to the interested readers.

\section{$\mathbf{\alpha}$-Acyclic hypergraphs}\label{sec:6}

The \textit{minimization} $\mathcal{M}(\mathcal{H})$ of a hypergraph $\mathcal{H}=(X,S)$ is a hypergraph $(X',S')$ with 
$X'=X$ and $S'=\{e\in S~|~\forall f\in S, e \not\subset f\}$.
To begin with, we introduce results of 
Brault-Baron \cite{Brault-Baron2016} and
Fagin \cite{Fagin1983514}.

\begin{lem}\cite{Brault-Baron2016}\label{thm:alpha}
A hypergraph $\mathcal{H}=(X,S)$ is $\alpha$-acyclic if and only if there is no set $X'\subseteq X$ such that either $\mathcal{M}(\mathcal{H}[X'])$ is a usual graph cycle (i.e., a connected $2$-regular $2$-uniform hypergraph) or the edge set of $\mathcal{M}(\mathcal{H}[X'])$ is
$\{X'\setminus \{x\}~|~x\in X'\}$.
\end{lem}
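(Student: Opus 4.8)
The plan is to derive Lemma~\ref{thm:alpha} from the classical characterization of $\alpha$-acyclicity as conformality of $\mathcal H$ together with chordality of its primal graph $P(\mathcal H)$ (the graph on $X$ whose edges are the pairs of vertices contained in a common edge of $\mathcal H$); under this reading the content of the lemma is precisely that ``$P(\mathcal H)$ has a chordless cycle'' produces the first forbidden configuration and ``$P(\mathcal H)$ has a clique lying in no edge of $\mathcal H$'' produces the second. First I would record two closure facts. For every $X'\subseteq X$ one has $P(\mathcal H[X'])=P(\mathcal H)[X']$, and if $\mathcal H$ is conformal then so is $\mathcal H[X']$, because a clique of $P(\mathcal H)[X']$ is a clique of $P(\mathcal H)$, hence lies in some $e\in S$, hence lies in the trace $e\cap X'$; chordality being hereditary for induced subgraphs, this shows that $\alpha$-acyclicity is hereditary under vertex-induced subhypergraphs. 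Moreover, since passing to the minimization $\mathcal M(\cdot)$ is a sequence of GYO-operations of type~(ii), which do not change whether GYO reaches the empty hypergraph, $\mathcal M(\mathcal H[X'])$ is $\alpha$-acyclic whenever $\mathcal H$ is.

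For the ``if'' half of the biconditional I would argue the contrapositive: suppose there is an $X'$ with $\mathcal M(\mathcal H[X'])$ a cycle $C_n$ with $n\geq 3$, or with edge set $\{X'\setminus\{x\}:x\in X'\}$ and $|X'|=n\geq 3$. Such a hypergraph is itself not $\alpha$-acyclic, since it is nonempty, has no empty edge, has no edge contained in another (all edges have equal size and are distinct), and has minimum degree at least $2$, so GYO is stuck on it; combining this with the two closure facts above, $\mathcal H$ cannot be $\alpha$-acyclic either.

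For the ``only if'' half, suppose $\mathcal H$ is not $\alpha$-acyclic and run GYO to a stuck hypergraph $\mathcal H^{\ast}$ obtained from $\mathcal H$; then $\mathcal H^{\ast}$ is nonempty, minimized, has no empty edge, and (assuming no isolated vertices) minimum degree at least $2$. The key claim is that such an $\mathcal H^{\ast}$ cannot be both conformal and have chordal primal graph: if it were, its edges would be exactly the maximal cliques of $P(\mathcal H^{\ast})$, and a simplicial vertex $v$ of the chordal graph $P(\mathcal H^{\ast})$ would lie in the unique maximal clique $N_{P}[v]$, hence in a unique edge, contradicting minimum degree $2$. So either $P(\mathcal H^{\ast})$ has a chordless cycle on a vertex set $X'$ — and then, since $P(\mathcal H^{\ast})[X']$ is a triangle-free cycle, every trace $e\cap X'$ has at most two elements and, when it has two, is a cycle-edge, so $\mathcal M(\mathcal H^{\ast}[X'])$ is precisely that cycle — or $\mathcal H^{\ast}$ has an inclusion-minimal clique $X'$ of $P(\mathcal H^{\ast})$ contained in no edge, and then by minimality each $X'\setminus\{x\}$ is a trace while every other trace is a proper subset of some $X'\setminus\{x\}$, so $\mathcal M(\mathcal H^{\ast}[X'])$ has edge set exactly $\{X'\setminus\{x\}:x\in X'\}$. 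Finally I would lift the configuration back to $\mathcal H$: each GYO step from $\mathcal H$ toward $\mathcal H^{\ast}$ deletes either an edge contained in another, or an empty edge, or a degree-one vertex $z$ from its unique edge, and none of these changes the set of maximal traces on a vertex set avoiding $z$, so $\mathcal M(\mathcal H[X'])=\mathcal M(\mathcal H^{\ast}[X'])$ for the $X'$ just produced.

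The step I expect to be the main obstacle is making the ``only if'' extraction fully rigorous: one must dispose of the degenerate cliques and cycles ($|X'|\leq 2$, vertices isolated in $P(\mathcal H^{\ast})$, hyperedges of size at most $1$), confirm that a stuck GYO state really has minimum degree at least $2$ once isolated vertices are excluded, and — if one wants a self-contained argument rather than invoking the Beeri--Fagin--Maier--Yannakakis theorem — reprove that a minimized conformal hypergraph with chordal primal graph does GYO-reduce to the empty hypergraph, essentially by reading off a perfect elimination ordering of $P(\mathcal H)$, with the simplicial-vertex observation above as the inductive seed. Care is also needed with the convention for $\mathcal H[X']$: throughout, an ``edge of $\mathcal H[X']$'' must be read as a trace $e\cap X'$.
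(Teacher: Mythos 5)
The paper does not prove this lemma at all: it is imported from Brault-Baron \cite{Brault-Baron2016} as a black box, so there is no in-paper argument to compare against. Your reconstruction via the Beeri--Fagin--Maier--Yannakakis characterization ($\alpha$-acyclicity is equivalent to conformality plus chordality of the primal graph) is the standard route to this statement, and its skeleton is sound: heredity of conformality and chordality under vertex-induced subhypergraphs gives the ``if'' half, and on a stuck GYO state the simplicial-vertex argument correctly forces either a chordless cycle of length at least $4$ (whose traces are cliques of that cycle, hence exactly the cycle edges after minimization) or an inclusion-minimal non-conformal clique $X'$ (whose maximal traces are exactly the sets $X'\setminus\{x\}$). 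The trade-off is clear: the citation costs the paper nothing, while your argument makes the lemma self-contained modulo BFMY, and in fact your simplicial-vertex contradiction already supplies the only direction of BFMY that the ``only if'' half needs.

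Three loose ends separate this from a finished proof. First, your justification that $\mathcal{M}(\mathcal{H}[X'])$ is $\alpha$-acyclic whenever $\mathcal{H}$ is invokes, implicitly, the confluence of GYO reduction (that performing the type-(ii) deletions first does not change whether GYO terminates empty); that is true but is itself a theorem needing proof. It is avoidable: $\mathcal{H}'$ and $\mathcal{M}(\mathcal{H}')$ have the same primal graph and are conformal simultaneously, so the BFMY criterion handles minimization directly. Second, your ``if'' direction silently restricts to $|X'|\geq 3$, and this restriction is genuinely necessary: for $|X'|=2$ the second configuration has edge set $\{\{x_1\},\{x_2\}\}$, which occurs in perfectly $\alpha$-acyclic hypergraphs (two disjoint singleton edges), so the lemma as literally printed is false without it. Since your ``only if'' extraction always produces $|X'|\geq 3$, you are proving the corrected statement, but this should be said explicitly. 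Third, the definitional repairs you flag are required rather than cosmetic: the paper's $\mathcal{H}[X']$ keeps whole edges rather than traces, and its GYO operation (i) does not cover isolated vertices, and under either literal reading the statement breaks on trivial examples. None of these is a wrong idea, but each must be written out before the argument stands on its own.
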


\begin{lem}\cite{Fagin1983514}\label{thm:alpha2}
If $\mathcal{H}$ is a hypergraph such that $\mathcal{B}(\mathcal{H})$ is a forest, then $\mathcal{H}$ is $\alpha$-acyclic.
\end{lem}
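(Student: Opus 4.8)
The plan is to argue directly from the GYO-reduction, using the forest structure of $\mathcal{B}(\mathcal{H})$ as an invariant that drives an induction on the number of incidences $|I(\mathcal{H})|$ (equivalently, on $|X|+|S|$). The first step is the uniform observation that each of the three GYO operations corresponds, on the Levi graph, to deleting a single vertex of $\mathcal{B}(\mathcal{H})$ together with all its incident edges: operation (i) deletes a vertex $x\in X$ lying in exactly one edge, which is a degree-$1$ vertex of $\mathcal{B}(\mathcal{H})$, together with its one incident edge; operation (ii) deletes a hyperedge, that is, a vertex on the $S$-side together with all its incident edges; and operation (iii) deletes an empty edge, that is, an isolated vertex on the $S$-side. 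Since deleting a vertex together with its incident edges from a forest always yields a forest, the hypothesis that $\mathcal{B}(\mathcal{H})$ is a forest is preserved after every single reduction step, so the induction hypothesis will remain available after any operation we perform.

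The heart of the argument is the claim that \emph{whenever $S\neq\emptyset$, at least one GYO operation is applicable}. I would establish this by contraposition: assume no operation applies and deduce $S=\emptyset$. If no operation applies, then by (iii) there is no empty edge, so every edge is nonempty and $\mathcal{B}(\mathcal{H})$ has at least one graph-edge whenever $S\neq\emptyset$; pick a connected component containing such an edge. Being a tree on at least two vertices, it has a leaf $\ell$. If $\ell\in X$, then $\ell$ is a vertex contained in exactly one edge, so operation (i) applies, a contradiction. If $\ell\in S$, then $\ell$ is a singleton edge $\{x\}$; since operation (i) does not apply, $x$ is not in exactly one edge, and because $x\in\ell$ it must lie in some further edge $f\neq\ell$, whence $\ell=\{x\}\subseteq f$ and operation (ii) applies, again a contradiction. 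Hence $S=\emptyset$, so the reduction can only terminate at the empty hypergraph.

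Combining the two steps completes the proof: starting from $\mathcal{H}$ with $\mathcal{B}(\mathcal{H})$ a forest, as long as edges remain we may perform a GYO step by the applicability claim; the resulting hypergraph still has a forest Levi graph by the invariance observation and is strictly smaller, so by induction it reduces to the empty hypergraph, and therefore so does $\mathcal{H}$. Thus $\mathcal{H}$ is $\alpha$-acyclic. (Formally, the base case is the empty hypergraph, which is $\alpha$-acyclic by definition.)

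I expect the only genuinely delicate point to be the leaf-case analysis in the applicability claim, specifically the treatment of a leaf that is a singleton edge $\{x\}$: one must notice that the negation of operation (i)'s applicability forces $x$ to have degree at least $2$, which is exactly what makes $\{x\}$ a subset of another edge and hence renders operation (ii) available. A minor bookkeeping subtlety worth a remark is the status of vertices of $X$ lying in no edge: such isolated vertices of $\mathcal{B}(\mathcal{H})$ are never touched by operation (i), but they are irrelevant here because the target of the reduction is the vanishing of the edge set $S$; I would state this explicitly to keep the induction clean.
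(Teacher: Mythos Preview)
The paper does not prove this lemma at all: it is stated with a citation to Fagin~\cite{Fagin1983514} and used as a black box, so there is no in-paper proof to compare against. Your argument is therefore a genuine addition rather than a re-derivation.

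Your proof is correct. The two-step structure (invariance of the forest property under each GYO move, plus the leaf dichotomy to show some move is always available while $S\neq\emptyset$) is the natural elementary argument, and your handling of the singleton-edge leaf case is exactly right: if $\ell=\{x\}$ is a leaf on the $S$-side and operation~(i) is blocked, then $x$ lies in a second edge $f$, whence $\ell\subseteq f$ and operation~(ii) fires. One small point worth tightening: the paper defines $\alpha$-acyclicity as GYO-reduction terminating in the \emph{empty hypergraph}, not merely in a hypergraph with $S=\emptyset$, and the three listed operations never delete an isolated vertex of $X$. You flag this, but your resolution (``the target of the reduction is the vanishing of the edge set $S$'') silently reinterprets the definition. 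It would be cleaner either to note that the standard GYO framework also permits deleting isolated vertices (or that hypergraphs here are assumed to have none), or to observe that if $\mathcal{B}(\mathcal{H})$ is a forest and $\mathcal{H}$ has an isolated vertex, the lemma as literally stated in the paper would fail---so some such convention is implicitly in force.
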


\begin{lem}\label{thm:linear}
If $\mathcal{H}$ is an $\alpha$-acyclic linear hypergraph, then $\mathcal{B}(\mathcal{H})$ is a forest.
\end{lem}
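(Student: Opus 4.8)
The plan is to prove the contrapositive: if $\mathcal{B}(\mathcal{H})$ is not a forest, then $\mathcal{H}$ is not $\alpha$-acyclic. So suppose $\mathcal{B}(\mathcal{H})$ contains a cycle. Since $\mathcal{B}(\mathcal{H})$ is bipartite with parts $X$ and $S$, any cycle has even length $2m$ and alternates between vertices and edges of $\mathcal{H}$; write it as $x_1, s_1, x_2, s_2, \ldots, x_m, s_m, x_1$ with the $x_i \in X$ distinct, the $s_i \in S$ distinct, $x_i \in s_i$, and $x_{i+1} \in s_i$ (indices mod $m$). Because $\mathcal{H}$ is linear, $|s_i \cap s_j| \le 1$ for $i \ne j$; in particular $s_i \cap s_{i+1} = \{x_{i+1}\}$ and consecutive edges share exactly that one vertex. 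I would first observe that such a cycle forces $m \ge 3$: if $m = 1$ there is no cycle, and if $m = 2$ then $x_1, x_2 \in s_1 \cap s_2$, contradicting linearity. So $\mathcal{B}(\mathcal{H})$ contains a cycle of length $\ge 6$.

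Next I would invoke Lemma \ref{thm:alpha} (Brault-Baron's characterization) with $X' = \{x_1, \ldots, x_m\}$. The goal is to show $\mathcal{M}(\mathcal{H}[X'])$ is a graph cycle $C_m$, namely the cycle $x_1 x_2 \cdots x_m x_1$. The induced subhypergraph $\mathcal{H}[X']$ has edge set $\{e \cap X' : e \cap X' \ne \emptyset,\ e \in S\}$ (with multiplicity collapsed). Among these edges, each $s_i \cap X'$ contains the pair $\{x_i, x_{i+1}\}$. The key point is that, by linearity, $s_i \cap X'$ can contain \emph{no other} $x_j$: if $x_j \in s_i$ for some $j \notin \{i, i+1\}$, then $s_i$ and $s_{j-1}$ share $x_j$, and $s_i$ and $s_j$ share $x_{j+1}$ — wait, one must be careful here. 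Actually the cleaner route: if $x_j, x_{j'} \in s_i$ with $j, j'$ both $\ne i, i+1$, or even a single extra chord, I want to derive that either two edges of $\mathcal{H}$ meet in two points, or that I can shorten the cycle. Let me instead argue as follows: choose the cycle in $\mathcal{B}(\mathcal{H})$ to be a \emph{shortest} cycle (girth). Then no $s_i$ can contain two of the $x_j$'s that are non-consecutive on the cycle, because that chord would create a strictly shorter cycle through $s_i$. And $s_i$ cannot contain $x_i, x_{i+1}$ plus a third consecutive-type vertex either, again by shortest-cycle or by linearity with a neighboring edge. This should force each $s_i \cap X' = \{x_i, x_{i+1}\}$ exactly, so $\mathcal{H}[X']$ restricted to these edges is precisely the graph cycle $C_m$. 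The remaining edges of $\mathcal{H}[X']$ — those $e \cap X'$ with $e \notin \{s_1, \ldots, s_m\}$ — either equal one of the $\{x_i, x_{i+1}\}$ pairs (harmless, collapsed by multiplicity) or are singletons or other subsets; here I need that none of them strictly contains any $s_i \cap X'$ and that after minimization the cycle edges survive. A subset $e \cap X'$ containing some $\{x_i, x_{i+1}\}$ properly would be a pair $\{x_i, x_{i+1}, x_j\}$ or larger; by the same shortest-cycle / linearity reasoning applied to $e$ versus the $s_\ell$, this cannot happen. Hence $\mathcal{M}(\mathcal{H}[X'])$ is exactly the graph cycle $C_m$ with $m \ge 3$, and Lemma \ref{thm:alpha} gives that $\mathcal{H}$ is not $\alpha$-acyclic.

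I would then assemble these observations: assume $\mathcal{B}(\mathcal{H})$ has a cycle, pass to a shortest one, use linearity + minimality to pin down the induced-and-minimized subhypergraph on the cycle's $X$-vertices as a genuine graph cycle, and contradict $\alpha$-acyclicity via Lemma \ref{thm:alpha}. Combined with Lemma \ref{thm:alpha2} (which gives the converse — a forest Levi graph implies $\alpha$-acyclic), this actually yields the stronger equivalence that for linear $\mathcal{H}$, $\alpha$-acyclicity is equivalent to $\mathcal{B}(\mathcal{H})$ being a forest, though only one direction is needed here.

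The main obstacle I expect is the bookkeeping in the middle step: carefully ruling out that an edge of $\mathcal{H}$ (whether one of the $s_i$ or an outside edge) meets $X'$ in a set that is not exactly a consecutive pair, so that after taking the induced subhypergraph and then its minimization we are left with precisely $C_m$ and nothing that either short-circuits it into a chord or swallows a cycle-edge. The right tool is to work with a shortest cycle of $\mathcal{B}(\mathcal{H})$ so that every would-be chord or superset immediately produces a shorter cycle or violates $|s \cap s'| \le 1$; I would want to state that reduction cleanly up front and then the case analysis becomes short.
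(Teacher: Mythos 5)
Your proposal is correct and follows essentially the same route as the paper's proof: take a shortest cycle of $\mathcal{B}(\mathcal{H})$, rule out length $4$ by linearity, and use minimality of the cycle to show that the minimization of the induced subhypergraph on the cycle's $X$-vertices is a genuine graph cycle, contradicting Lemma \ref{thm:alpha}. The paper states the final step more tersely ("by the minimum of $q$"), whereas you spell out the chord/outside-edge bookkeeping, but the argument is the same.
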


\begin{proof}
Let $\mathcal{H}=(X,S)$ and $B(\mathcal{H})=(V_1,V_2; E)$ where $V_1=X,V_2=S$ and $E=\{xs~|~x\in X,s\in S,x\in s\}$.
Suppose, for a contradiction, that $\mathcal{B}(\mathcal{H})$ contains a cycle. We choose $C$ be the shortest cycle of $\mathcal{B}(\mathcal{H})$ and denote $C$ by $s_1x_1\cdots s_{q}x_{q}s_1$ ($q\geq 2$, $s_1\in S$).

If $q=2$, then $\{x_1,x_2\}\subseteq s_1\cap s_2$, contradicting the linearity of $\mathcal{H}$.
If $q\geq 3$, then let $X'=\{x_1,x_2,\ldots,x_q\}$.
For each $1\leq i\leq q$,
$s_{i}$ is the unique edge containing $\{x_{i-1},x_{i}\}$ (here we denote $x_0$ by $x_q$) by the linearity of $\mathcal{H}$. Hence by the minimum of $q$, $M(\mathcal{H}[X'])$ is a usual graph cycle, contradicting Lemma \ref{thm:alpha}.
\end{proof}

The following is an immediate corollary of Lemmas \ref{thm:alpha2} and \ref{thm:linear}.

\begin{cor}\label{cora}
If $\mathcal{H}$ is a linear hypergraph, then $\mathcal{H}$ is $\alpha$-acyclic if and only if $\mathcal{B}(\mathcal{H})$ is a forest.
\end{cor}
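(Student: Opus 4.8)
The plan is to obtain Corollary~\ref{cora} by simply assembling the two implications already established, one for each direction of the biconditional, so essentially no new argument is needed.

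First I would handle the ``if'' direction: assume $\mathcal{B}(\mathcal{H})$ is a forest. Here linearity plays no role at all, and Lemma~\ref{thm:alpha2} applies verbatim to conclude that $\mathcal{H}$ is $\alpha$-acyclic. Then for the ``only if'' direction I would assume $\mathcal{H}$ is a linear hypergraph that is $\alpha$-acyclic; this is precisely the hypothesis of Lemma~\ref{thm:linear}, whose conclusion is that $\mathcal{B}(\mathcal{H})$ is a forest. Combining the two implications yields the stated equivalence for every linear hypergraph $\mathcal{H}$.

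There is no genuine obstacle at the level of the corollary; the substantive content has already been absorbed into Lemma~\ref{thm:linear}, which is where the linearity hypothesis is actually used. Recall the mechanism there: one argues by contradiction, picks a shortest cycle $s_1x_1\cdots s_qx_qs_1$ in $\mathcal{B}(\mathcal{H})$, notes that $q=2$ forces $\{x_1,x_2\}\subseteq s_1\cap s_2$ in violation of linearity, and that $q\geq 3$ forces each $s_i$ to be the unique edge through $\{x_{i-1},x_i\}$, so that the minimization $\mathcal{M}(\mathcal{H}[\{x_1,\dots,x_q\}])$ is an ordinary graph cycle, contradicting the Brault-Baron characterization (Lemma~\ref{thm:alpha}). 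Since Lemmas~\ref{thm:alpha2}, \ref{thm:linear}, and~\ref{thm:alpha} are all in place, the corollary follows immediately, and I would simply state it as a one-line consequence of Lemmas~\ref{thm:alpha2} and~\ref{thm:linear}.
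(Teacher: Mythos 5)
Your proposal is correct and matches the paper exactly: the paper states Corollary~\ref{cora} as an immediate consequence of Lemmas~\ref{thm:alpha2} and~\ref{thm:linear}, using each lemma for one direction of the biconditional just as you do. Nothing further is needed.
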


\begin{lem}\label{lvu}
If $\mathcal{H}$ is an $\alpha$-acyclic linear hypergraph and $\mathcal{H}'\subseteq \mathcal{H}$, then  $\mathcal{H}'$ is also an $\alpha$-acyclic linear hypergraph.
\end{lem}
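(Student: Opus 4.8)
The plan is to reduce everything to Corollary \ref{cora}, which characterizes $\alpha$-acyclicity of linear hypergraphs by the forest property of their Levi graphs; this characterization, unlike the definition through GYO-reduction, is manifestly monotone under taking subhypergraphs. First I would record that linearity itself is hereditary: if $\mathcal{H}'=(X',S')\subseteq \mathcal{H}=(X,S)$ and $\mathcal{H}$ is linear, then any two distinct edges $s,s'\in S'$ are also distinct edges of $S$, so $|s\cap s'|\leq 1$; hence $\mathcal{H}'$ is linear. This is the easy half, and it is essentially the reason the pathology noted by Simon and Wojtczak for general $\alpha$-acyclic hypergraphs does not arise in the linear world.

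Next I would observe that the Levi-graph construction is monotone: writing $\mathcal{B}(\mathcal{H})=(V_1,V_2;E)$ with $V_1=X$, $V_2=S$, $E=\{xs~|~x\in X, s\in S, x\in s\}$ and $\mathcal{B}(\mathcal{H}')=(V_1',V_2';E')$ with $V_1'=X'$, $V_2'=S'$, $E'=\{xs~|~x\in X', s\in S', x\in s\}$, we have $V_1'\subseteq V_1$, $V_2'\subseteq V_2$, and $E'\subseteq E$, so $\mathcal{B}(\mathcal{H}')$ is a subgraph of $\mathcal{B}(\mathcal{H})$. Since $\mathcal{H}$ is an $\alpha$-acyclic linear hypergraph, Corollary \ref{cora} gives that $\mathcal{B}(\mathcal{H})$ is a forest; a subgraph of a forest is a forest, so $\mathcal{B}(\mathcal{H}')$ is a forest as well.

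Finally I would close the loop: $\mathcal{H}'$ is linear by the first step and $\mathcal{B}(\mathcal{H}')$ is a forest by the second step, so by the converse direction of Corollary \ref{cora} (namely Lemma \ref{thm:alpha2}) the hypergraph $\mathcal{H}'$ is $\alpha$-acyclic. Together with its linearity, this shows $\mathcal{H}'$ is an $\alpha$-acyclic linear hypergraph, as claimed.

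There is no genuinely hard step once Corollary \ref{cora} is available; the only point requiring minor care is the bookkeeping in the definition of subhypergraph — one must use that an edge $s\in S'$ keeps its vertex set, so that the incidence relation $x\in s$ is the same in $\mathcal{H}'$ as in $\mathcal{H}$, which is exactly what makes $\mathcal{B}(\mathcal{H}')$ a subgraph of $\mathcal{B}(\mathcal{H})$. Attempting the proof directly from GYO-reduction, by tracking how vertex/edge eliminations in $\mathcal{H}'$ relate to those available in $\mathcal{H}$, would be the awkward route, and passing through the Levi-graph characterization is precisely what lets us avoid it.
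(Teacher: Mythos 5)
Your proof is correct and follows essentially the same route as the paper's: both pass through Corollary \ref{cora}, observe that $\mathcal{B}(\mathcal{H}')$ is a subgraph of the forest $\mathcal{B}(\mathcal{H})$ and hence a forest, note that linearity is trivially hereditary, and apply the converse direction of Corollary \ref{cora}. Your version merely spells out the bookkeeping (and the motivation for avoiding GYO-reduction) in more detail than the paper does.
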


\begin{proof}
Since $\mathcal{H}$ is $\alpha$-acyclic and linear, $\mathcal{B}(\mathcal{H})$ is a forest by Corollary \ref{cora}.
Since $\mathcal{H}'\subseteq \mathcal{H}$, $\mathcal{B}(\mathcal{H}')\subseteq \mathcal{B}(\mathcal{H})$ and thus 
$\mathcal{B}(\mathcal{H}')$ is a forest.
It is clear that $\mathcal{H'}$ is linear, and therefore it is $\alpha$-acyclic by Corollary \ref{cora}.
\end{proof}

Lemma \ref{lvu} is a key point of proving Theorem \ref{cort} by induction. To accomplish the proof of Theorem \ref{cort}, we need one more lemma as follows.

Given a strong edge coloring $\phi$ of a graph $G$, we use $\phi[v]$ denote the set of colors that are assigned to the edges incident with $v$.

\begin{lem}\label{lem:permution}
Let $T$ be a rooted tree with root $v$ and let $N_T(v)=\{u_1,\ldots,u_s\}$ such that $\D_T(u_i)\geq \D_T(u_j)$ whenever $i\geq j$.
We can modify any strong edge coloring  of $T$ by permuting the labels of the colors into a strong edge coloring $\varphi$ so that 
\begin{align*} 
\varphi[u_i]\setminus \varphi(vu_i) \supseteq \varphi[u_{i+1}]\setminus \varphi(vu_{i+1})
\end{align*}for each $1\leq i \leq s-1$.
\end{lem}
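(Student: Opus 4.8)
The plan is to exploit the tree structure: the subtrees hanging below the children $u_1,\dots,u_s$ of $v$ communicate with one another only through the $s$ edges $vu_1,\dots,vu_s$ at the root, so we are free to relabel the colours inside each such subtree essentially independently, and this freedom suffices to force the asserted nesting. Note first that a single global permutation of the colour set changes none of the inclusions $\varphi[u_i]\setminus\varphi(vu_i)\supseteq\varphi[u_{i+1}]\setminus\varphi(vu_{i+1})$, so it is this local flexibility, not a global relabelling, that does the work.

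Fix notation. For $1\le i\le s$ let $T_i$ be the subtree of $T$ spanned by $u_i$ and all its descendants, let $c_i=\varphi(vu_i)$, and let $P_i=\varphi[u_i]\setminus\{c_i\}$, the set of colours on the edges from $u_i$ to its children. The edges incident with $u_i$ are pairwise within distance two, so $|\varphi[u_i]|=\D_T(u_i)$ and $|P_i|=\D_T(u_i)-1$; and since every edge $u_iw$ (with $w$ a child of $u_i$) is within distance two of every $vu_j$, we have $P_i\cap\{c_1,\dots,c_s\}=\emptyset$. Moreover $\{vu_1,\dots,vu_s\}$ together with the edges incident with a vertex $u_m$ of maximum degree among $u_1,\dots,u_s$ form a clique in $L(T)^2$, so the number $k$ of colours satisfies $k\ge s+\D_T(u_m)-1$; hence $C\setminus\{c_1,\dots,c_s\}$, where $C$ is the colour set, has at least $\D_T(u_m)-1$ elements.

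The heart of the argument is a locality claim: if $\pi_1,\dots,\pi_s$ are permutations of $C$, each fixing every element of $\{c_1,\dots,c_s\}$, and we recolour every edge $e$ of each $T_i$ by $\pi_i(\varphi(e))$ while leaving the edges $vu_j$ unchanged, then the result is again a strong edge colouring of $T$. One checks the possible conflicts by type: (a) two edges inside a common $T_i$ — since $T$ is a tree their $T$-distance equals their $T_i$-distance, so the bijection $\pi_i$ keeps them properly coloured; (b) an edge of $T_i$ and a root edge $vu_j$ — the only edges of $T_i$ within distance two of a root edge are the edges $u_iw$, whose colours lie in $C\setminus\{c_1,\dots,c_s\}$ both before and after $\pi_i$, and the grandchild edges, which are within distance two only of $vu_i$ and keep a colour different from $c_i$ because $\pi_i(c_i)=c_i$; (c) an edge of $T_i$ and an edge of $T_j$ with $i\ne j$ — every such pair is at distance at least three in $T$, so there is nothing to check. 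Then I would assemble the colouring: reindex so that $\D_T(u_1)\ge\cdots\ge\D_T(u_s)$ (the direction matching the claimed inclusions), and using the size bound above pick a nested chain $Q_1\supseteq Q_2\supseteq\cdots\supseteq Q_s$ inside $C\setminus\{c_1,\dots,c_s\}$ with $|Q_i|=\D_T(u_i)-1$. For each $i$ the sets $P_i$ and $Q_i$ are equinumerous subsets of $C\setminus\{c_1,\dots,c_s\}$, so there is a permutation $\pi_i$ of $C$ fixing $\{c_1,\dots,c_s\}$ pointwise with $\pi_i(P_i)=Q_i$; recolouring each $T_i$ by $\pi_i$ yields, by the claim, a strong edge colouring $\varphi$ with $\varphi(vu_i)=c_i$ and $\varphi[u_i]\setminus\varphi(vu_i)=Q_i$, which gives the required nesting.

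The main obstacle is the locality claim, in particular parts (b) and (c): one must verify that recolouring different subtrees with different permutations cannot introduce a new distance-$\le 2$ conflict, which rests precisely on the distance computations showing that edges of distinct $T_i$ are far apart in $T$ and that the only edges of $T_i$ reaching outside it are the two topmost layers. A secondary point is the counting guaranteeing that $C\setminus\{c_1,\dots,c_s\}$ is large enough to host the biggest palette $Q_1$; this is where the clique $\{vu_1,\dots,vu_s\}\cup E_T(u_m)$ in $L(T)^2$ is used.
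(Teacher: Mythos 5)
Your proof is correct, but it takes a genuinely different route from the paper's. The paper argues by contradiction via an extremal exchange: it takes the largest $j$ for which the nesting $\varphi[u_1]\setminus\varphi(vu_1)\supseteq\cdots\supseteq\varphi[u_j]\setminus\varphi(vu_j)$ can be achieved, fixes among such colorings one minimizing $Z_\varphi=(\varphi[u_{j+1}]\setminus\varphi(vu_{j+1}))\setminus(\varphi[u_j]\setminus\varphi(vu_j))$, and then transposes a single pair of colors $\alpha,\beta$ inside the subtree $T_{u_{j+1}}$ to contradict either the minimality of $|Z_\varphi|$ or the maximality of $j$. You instead give a direct construction: you isolate and prove the general locality claim that arbitrary colour permutations $\pi_i$, applied independently to the subtrees $T_i$ and fixing the root-edge colours $c_1,\dots,c_s$ pointwise, preserve strong-properness, and then map each palette $P_i$ onto a prescribed nested chain $Q_1\supseteq\cdots\supseteq Q_s$ in one shot. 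The paper's two-colour swap within one subtree is exactly the special case of your locality claim where $\pi_i$ is a transposition avoiding $\{c_1,\dots,c_s\}$, so your argument makes explicit (and fully exploits) the structural fact the paper uses only implicitly; in exchange you need the additional counting step (the clique $\{vu_1,\dots,vu_s\}\cup E_T(u_m)$ in $L(T)^2$) to guarantee that $C\setminus\{c_1,\dots,c_s\}$ can host the largest target set $Q_1$, a step the paper avoids entirely since its exchange never leaves the set of colours already present. Your distance computations in parts (a)--(c) are the right ones and all check out. One cosmetic remark: the hypothesis as printed, $\D_T(u_i)\geq\D_T(u_j)$ whenever $i\geq j$, is inverted relative to the direction of the claimed inclusions (which force $\D_T(u_i)\geq\D_T(u_{i+1})$); your reindexing to $\D_T(u_1)\geq\cdots\geq\D_T(u_s)$ matches the intended statement and the paper's own usage in its proof.
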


\begin{proof}
Given a strong edge coloring $\phi$ of $T$, let $j$ be the largest integer such that $1\leq j\leq s$ and $\phi$ 
can be modified by permuting the labels of the colors into a strong edge coloring $\varphi$ 
so that 
\begin{align}\label{eq}
\varphi[u_1]\setminus \varphi(vu_1) \supseteq \cdots \supseteq \varphi[u_{j}]\setminus \varphi(vu_{j}).
\end{align}
If $j=s$, then there is nothing to prove. We thus assume $j\leq s-1$.

We fix a $\varphi$ satisfying \eqref{eq} so that $Z_\varphi:=\big(\varphi[u_{j+1}]\setminus \varphi(vu_{j+1})\big)\setminus \big(\varphi[u_{j}]\setminus \varphi(vu_{j})\big)$ is as minimum as possible.
By the choice of $j$, $Z_\varphi\neq \emptyset$.
Let \[\alpha\in Z_\varphi.\]

Since $\deg_T(u_j)\geq \deg_T(u_{j+1})$ and $\varphi[u_{j}]\setminus \varphi(vu_{j}) \not\supseteq \varphi[u_{j+1}]\setminus \varphi(vu_{j+1})$, $\big(\varphi[u_{j}]\setminus \varphi(vu_{j})\big)\setminus \big(\varphi[u_{j+1}]\setminus \varphi(vu_{j+1})\big)$ is non-empty, and thus we let 
\[\beta\in \big(\varphi[u_{j}]\setminus \varphi(vu_{j})\big)\setminus \big(\varphi[u_{j+1}]\setminus \varphi(vu_{j+1})\big).\]

Since $\varphi$ is a strong edge coloring, $\varphi[v]\cap \{\alpha,\beta\}=\emptyset$. It guarantees that exchanging the colors of $\alpha$ and $\beta$ in the colored subtree $T_{u_{j+1}}$ induced by $u_{j+1}$ and its descendants would result in a strong edge coloring $\varphi'$ of $T$ such that
\[\varphi'[u_1]\setminus \varphi'(vu_1) \supseteq \cdots \supseteq \varphi'[u_{j}]\setminus \varphi'(vu_{j}),\]
and either
\begin{enumerate}[label=(\alph*)]
    \item\label{a} $1\leq |Z_{\varphi'}|<|Z_{\varphi}|$, or
    \item\label{b} $\varphi'[u_{j}]\setminus \varphi'(vu_{j}) \supseteq \varphi'[u_{j+1}]\setminus \varphi'(vu_{j+1})$.
\end{enumerate}
Note that \ref{a} contradicts the choice of $\varphi$ and \ref{b} contradicts the choice of $j$. This completes the proof.
\end{proof}

Now we are ready to prove Theorem \ref{cort} by the following two separating theorems.

\begin{thm}\label{ccxx}
If $\mathcal{H}$ is an $\alpha$-acyclic linear hypergraph and $\Delta,k$ are fixed integers such that $\Delta(\mathcal{H})\leq \Delta$ and $r(\mathcal{H})\leq k$, then $\chii{\mathcal{H}}\leq \Delta+k-1$.
\end{thm}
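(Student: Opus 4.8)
The plan is to prove Theorem~\ref{ccxx} by induction on the number of edges of $\mathcal{H}$, using Corollary~\ref{cora} and Lemma~\ref{lvu} to pass to subhypergraphs, and Lemma~\ref{lem:permution} to organize the colors at a well-chosen vertex. Since $\mathcal{H}$ is $\alpha$-acyclic and linear, $B:=\mathcal{B}(\mathcal{H})$ is a forest by Corollary~\ref{cora}, so by Observation~\ref{lem:bijection} it suffices to produce a strong edge $(\Delta+k-1)$-coloring of $B$. The base case (no edges, or a single edge) is trivial since a single edge $s$ contributes $|s|\le k\le \Delta+k-1$ incidences all sharing the vertex side, and they can be colored distinctly. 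For the inductive step I would work in the forest $B$ and peel off a ``leaf edge'' of $\mathcal{H}$: pick a leaf $\ell$ of $B$. If $\ell$ is a vertex of $\mathcal{H}$ (degree-$1$ vertex), delete it; the resulting hypergraph is still $\alpha$-acyclic and linear by Lemma~\ref{lvu}, color by induction, and then the single uncolored incidence $(\ell,s)$ has at most $(\Delta-1)+(r(\mathcal{H})-1)\le \Delta+k-2$ forbidden colors, so a free color remains. The substantive case is when every leaf of $B$ lies on the $S$-side, i.e.\ there is an edge $s\in S$ that meets the rest of $\mathcal{H}$ in exactly one vertex $v$ (a ``pendant edge'' hanging off $v$).

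For that case, let $\mathcal{H}'=\mathcal{H}-s$, which is $\alpha$-acyclic and linear by Lemma~\ref{lvu}; take a strong edge coloring $\varphi'$ of $B':=\mathcal{B}(\mathcal{H}')$ with at most $\Delta+k-1$ colors by induction. The incidences of $s$ other than $(v,s)$ are new isolated-on-the-$X$-side incidences: each vertex $x\in s\setminus\{v\}$ has, in $\mathcal{H}$, degree $1$, so $(x,s)$ conflicts only with the other incidences of $s$ and with incidences $(x,s')$ — but there are none of the latter. Thus I must color the $|s|\le k$ incidences of $s$ so that they are mutually distinct, distinct from $\varphi'[v]$ (colors already at $v$), and such that $(v,s)$ avoids colors on incidences $(u,s')$ with $\{u,v\}\subseteq s'$ or adjacent through $v$. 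Here is where Lemma~\ref{lem:permution} enters: viewing $B$ as a tree rooted appropriately and applying the lemma at $v$, I can permute colors so that the color-sets $\varphi'[u_i]\setminus\varphi'(vu_i)$ at the neighbors $u_i$ of $v$ in $B'$ are nested, with the largest belonging to the neighbor of largest degree. This nesting controls the union $\bigcup_i(\varphi'[u_i]\setminus\varphi'(vu_i))$ — the colors that $(v,s)$ must additionally avoid — bounding it by the single largest such set, of size at most $k-1$ (since a neighbor edge $s'$ of $v$ has at most $k-1$ other incidences forbidden to $(v,s)$ through the ``$\{u,v\}\subseteq s'$'' rule) rather than by a sum over all neighbors. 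Combining: $(v,s)$ must avoid $\varphi'[v]$ of size $\le \Delta-1$ and an additional set of size $\le k-1$; but these two overlap (the colors at $v$ include exactly the edge-colors $\varphi'(vu_i)$, and... ) — the clean bookkeeping is that the total forbidden set for $(v,s)$ has size $\le \Delta+k-2$, leaving a free color; then the remaining $\le k-1$ incidences of $s$ avoid only $\varphi[v]$ (now size $\le \Delta$) and each other, needing $\le \Delta+k-2$ colors total, so they too can be completed within $\Delta+k-1$.

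The main obstacle, and the place the argument has to be executed carefully, is precisely this counting for the incidence $(v,s)$: one must show that the set of colors forbidden to $(v,s)$ — namely $\varphi[v]$ together with, for each edge $s'\ne s$ containing $v$, the colors $\{\varphi(u,s'):u\in s'\setminus\{v\}\}$ — has size at most $\Delta+k-2$, not the naive $(\Delta-1)+(\Delta-1)(k-1)$. The resolution is that $\bigcup_{s'\ni v,\,s'\ne s}\{\varphi(u,s'):u\in s'\setminus\{v\}\}$, after the permutation of Lemma~\ref{lem:permution}, is contained in (the colors at $v$) $\cup$ (one nested ``extra'' set of size $\le k-1$): any color $\varphi(u,s')$ with $u\ne v$ either already appears at $v$ (if it equals some $\varphi(v,s'')$) or lies in $\varphi[u]\setminus\varphi(vu)$ in the tree $B$, and the nesting forces all these extras into the single largest neighbor's set. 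So the forbidden set is $(\varphi[v]) \cup (\text{one set of size} \le k-1)$ with $|\varphi[v]|\le \Delta-1$ since $vs$ is not yet colored, giving $\le (\Delta-1)+(k-1)=\Delta+k-2$ forbidden colors. That leaves a color for $(v,s)$; afterward $|\varphi[v]|\le\Delta$, and the remaining incidences of $s$ need to dodge $\varphi[v]$ and each other, i.e.\ at most $\Delta+(k-2)=\Delta+k-2$ colors, so all of $s$ is colored within the palette. This finishes the induction and hence Theorem~\ref{ccxx}.
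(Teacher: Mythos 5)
Your strategy coincides with the paper's: induct on the Levi forest, peel off a leaf, and use Lemma~\ref{lem:permution} to nest the color sets $\varphi[u_i]\setminus\varphi(vu_i)$ at the neighbors of the attachment vertex, so that the union of "second-neighborhood" colors collapses to a single set of size at most $k-1$ (or $\Delta-1$), giving a forbidden set of size at most $\Delta+k-2$. The paper removes a single leaf $r$ of the tree $\mathcal{B}(\mathcal{H})$ (one incidence at a time) and treats the two sides of the bipartition symmetrically; you instead remove a whole pendant edge and then color its $|s|\le k$ incidences, and your bookkeeping for that case is correct (in particular, after $(v,s)$ is colored, the remaining incidences $(x,s)$ with $\deg_{\mathcal{H}}(x)=1$ really do only need to avoid $\varphi[v]$ and each other, by linearity).

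The genuine gap is in the case you dismiss as easy. If $\ell$ is a degree-one vertex lying in the edge $s$, the incidence $(\ell,s)$ is \emph{not} adjacent only to the other incidences of $s$: by rule (ii) of the adjacency definition, $\{\ell,x'\}\subseteq s$ makes $(\ell,s)$ adjacent to every incidence $(x',s')$ with $x'\in s\setminus\{\ell\}$ and $s'\ni x'$. Equivalently, in the tree the edge $\ell s$ must avoid all edges at distance at most $2$, i.e. $\bigcup_{x'\in s\setminus\{\ell\}}\varphi[x']$, whose naive size is $(k-1)\Delta$, not $(\Delta-1)+(k-1)$. So "a free color remains" does not follow from the count you give; this case needs exactly the same device as your other case, namely rooting at $\ell$ and applying Lemma~\ref{lem:permution} at $s$, whose children are the other vertices of $s$, so that the union collapses to $\varphi[s]\cup\bigl(\varphi[x_1]\setminus\varphi(x_1s)\bigr)$ of size at most $(k-1)+(\Delta-1)$. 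Once you do this the two cases become symmetric and can be merged into the paper's single uniform step. Two smaller points: your induction is "on the number of edges," but deleting a degree-one vertex does not decrease the edge count, so the parameter should be (vertices $+$ edges) as in the paper; and your dichotomy is misstated, since "every leaf of $B$ lies on the $S$-side" describes singleton edges $|s|=1$, whereas the pendant-edge configuration you actually analyze (an edge all but one of whose vertices have degree one) arises from leaves on the $X$-side. Neither of these affects the substance, but both need to be repaired along with the main counting error.
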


\begin{proof}
We proceed induction on the sum of the number of vertices and edges of $\mathcal{H}$. 
Let $\mathcal{H}_1,\ldots,\mathcal{H}_s$ ($s\geq 1$) be components of $\mathcal{H}$. 
If $s\geq 2$, then by Lemma \ref{lvu} and then by induction, $\chii{\mathcal{H}_j}\leq\Delta+k-1$ for $1\leq j\leq s$, and thus
$\chii{\mathcal{H}}=\max_{1\leq j\leq s}\chii{\mathcal{H}_j}\leq \Delta+k-1$.
So we assume $s=1$ below.

Since $\mathcal{H}$ is connected and $\alpha$-acyclic, $\mathcal{B}(\mathcal{H})$ is a tree by Lemma \ref{thm:linear}. Denote this tree by $T$ and root it at a leaf $r$. For every vertex $u$ of $T$, let $T_u$ be the subtree of $T$ induced by $u$ and its descendants.
Let $N_{T_r}(r)=\{z\}$ and $N_{T_{z}}(z)=\{u_1,u_2,\ldots,u_s\}$. Assume, without loss of generality, that $\deg_{T_{z}}(u_i)\geq \deg_{T_{z}}(u_{i+1})$ for each $1\leq i\leq s-1$. 
Let $\mathcal{H}'$ be the graph derived from $\mathcal{H}$ by removing $r$.
Now $T_{z}=\mathcal{B}(\mathcal{H}')$.

Note that $r$ is a vertex of $T$, representing either an edge or a vertex of $\mathcal{H}$.
If $r$ represents an edge (resp.\,a vertex) of $\mathcal{H}$, then $z$ is the unique vertex contained in (resp.\,edge containing) $r$ in $\mathcal{H}$, and $u_i$'s are edges containing (resp.\,vertices contained in) $z$.
It follows $\deg_T(z)\leq \Delta(\mathcal{H})\leq \Delta$ and $\deg_T(u_i)\leq k$ for each $1\leq i\leq s$ (resp.\,$\deg(z,T)\leq k$ and $\deg_T(u_i)\leq \Delta(\mathcal{H})\leq \Delta$  for each $1\leq i\leq s$).
By Lemma \ref{lvu} and then by induction, $\mathcal{H}'$ has a proper incidence $(\Delta+k-1)$-coloring, which can be translated into a strong edge $(\Delta+k-1)$-coloring $\phi$ of $T_{\ell}$ by Observation \ref{lem:bijection}. 

We permute by Lemma \ref{lem:permution} the labels of the colors of $\phi$ so that the resulting coloring $\varphi$ of $T_{\ell}$ satisfies that $\varphi[u_{i}]\setminus \varphi(vu_i) \supseteq \varphi[u_{i+1}]\setminus \varphi(vu_{i+1})$ for each $1\leq i \leq s-1$. 
Now we can finish a strong edge $(\Delta+k-1)$-coloring of $T$ by coloring the last uncolored edge $r\ell$ of $T$ with a color not in $\varphi[\ell]\cup \varphi[u_1]$. This is possible since $|\varphi[\ell]\cup \varphi[u_1]|\leq \Delta+k-2$ no matter $r$ represents an edge or a vertex of $\mathcal{H}$. Again, by Observation \ref{lem:bijection}, the strong edge $(\Delta+k-1)$-coloring of $T$ can be translated back to a proper incidence  $(\Delta+k-1)$-coloring of $\mathcal{H}$. Hence $\chi_\mathcal{I}(\mathcal{H})\leq\Delta+k-1$.
\end{proof}

\begin{thm}
If $\mathcal{H}$ is an $\alpha$-acyclic  $k$-uniform linear hypergraph, then $\chii{\mathcal{H}}=\Delta(\mathcal{H})+k-1$.
\end{thm}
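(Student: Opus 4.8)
The plan is to combine the upper bound already proved in Theorem \ref{ccxx} with a matching lower bound construction. By Theorem \ref{ccxx} applied with $\Delta=\Delta(\mathcal{H})$ and $k=r(\mathcal{H})=k$ (using $k$-uniformity so $r(\mathcal{H})=k$), we immediately get $\chii{\mathcal{H}}\leq \Delta(\mathcal{H})+k-1$. So the only thing left is to show $\chii{\mathcal{H})}\geq \Delta(\mathcal{H})+k-1$ for every $\alpha$-acyclic $k$-uniform linear hypergraph. The first subtlety is that the lower bound cannot hold for the trivially degenerate cases (e.g.\ a hypergraph with a single edge, where $\Delta(\mathcal{H})=1$ and the incidences within that one edge still need $k$ distinct colors, so $\chii{\mathcal{H})}=k=\Delta(\mathcal{H})+k-1$ — actually this works); one should check small cases but the statement as phrased ought to go through once we observe that any $k$-uniform hypergraph with at least one edge already forces $k$ colors on that edge's incidences.

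The heart of the argument is the lower bound $\chii{\mathcal{H})}\geq \Delta(\mathcal{H})+k-1$. First I would pick a vertex $x$ of maximum degree $\Delta=\Delta(\mathcal{H})$ and let $s_1,\dots,s_\Delta$ be the edges through $x$. The $\Delta$ incidences $(x,s_1),\dots,(x,s_\Delta)$ are pairwise adjacent (rule (i)), so they get $\Delta$ distinct colors. Now fix one edge, say $s_1$; since $|s_1|=k$, pick a vertex $y\in s_1$ with $y\neq x$. The $k$ incidences of the form $(z,s_1)$ for $z\in s_1$ are pairwise adjacent (rule (ii), both contained in $s_1$), so they use $k$ distinct colors. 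Moreover $(x,s_1)$ is common to both families. The question is whether these two palettes can be made to overlap in more than just the color of $(x,s_1)$. The colors on $\{(x,s_j):j\geq 2\}$ and the colors on $\{(z,s_1):z\in s_1, z\neq x\}$ are adjacent whenever some $z\in s_1\cap s_j$, but by linearity $s_1\cap s_j=\{x\}$, so there is no forced adjacency between $(x,s_j)$ and $(z,s_1)$ for $z\neq x$ — so a naive count only gives $\max(\Delta,k)$, not $\Delta+k-1$. Hence the lower bound must exploit the acyclicity or the global structure more cleverly, not just a single vertex.

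The fix, I expect, is to work at a vertex $x$ of degree $\Delta$ together with a carefully chosen edge and argue via the incidences along an edge AND the vertex-incidences, using the fact that in an $\alpha$-acyclic linear hypergraph the Levi graph is a forest (Corollary \ref{cora}), so we may root it and find a ``pendant'' configuration. Concretely: since $\mathcal{B}(\mathcal{H})$ is a forest, there is a leaf; walking up, there is a vertex $x$ of $\mathcal{H}$ and an edge $s$ through $x$ such that in $\mathcal{B}(\mathcal{H})$ either $x$ or $s$ has a neighbor that is a leaf. If $\mathcal{H}$ has a vertex $x$ of degree exactly $\Delta$ that lies on an edge $s$ all of whose other $k-1$ vertices have degree $\geq 2$ — i.e.\ lie on another edge — one could try to force adjacencies. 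But actually the clean route is: choose $x$ with $\deg(x)=\Delta$, choose any $s\ni x$, and note the incidence $(x,s)$ is adjacent to the $\Delta-1$ incidences $(x,s')$, $s'\ni x$, $s'\neq s$, and to the $k-1$ incidences $(z,s)$, $z\in s$, $z\neq x$. These $\Delta-1+k-1$ incidences together with $(x,s)$: are the $(x,s')$'s and the $(z,s)$'s mutually adjacent? $(x,s')$ and $(z,s)$ share no vertex and $z\notin s'$ (linearity: $s\cap s'=\{x\}$, $z\neq x$), so not adjacent. So the closed neighbourhood of $(x,s)$ in the incidence-adjacency graph is NOT a clique, and the bound genuinely needs more.

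The real obstacle, and where I would focus the effort, is therefore proving the sharp lower bound — I suspect one needs a vertex $x$ of degree $\Delta$ and an edge $s\ni x$ such that $s$ contains a second vertex $y$ of degree $\geq 2$; then consider an edge $s'\ni x$, $s'\neq s$, and an edge $s''\ni y$, $s''\neq s$. Chase the adjacencies: $(x,s')\sim(x,s)\sim(y,s)\sim(y,s'')$, and crucially $(x,s')\sim(y,s'')$? No. Hmm — so instead the lower bound likely proceeds by a global/counting or inductive argument mirroring Theorem \ref{ccxx}, or by exhibiting that the specific extremal $k$-uniform example (a suitable ``tight tree'' of edges) needs exactly $\Delta+k-1$; one constructs the Levi forest, roots it, and shows the greedy/permutation coloring in the proof of Theorem \ref{ccxx} is forced to use all $\Delta+k-1$ colors on some edge $r\ell$ when the degrees conspire. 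I would write: take $\mathcal{H}$ with a vertex $x$, $\deg(x)=\Delta$, on an edge $s$, $|s|=k$, and since $\alpha$-acyclic $k$-uniform (with $\Delta,k\geq 2$ and enough edges) we can find the configuration where the incidence $(x,s)$, the $\Delta-1$ incidences $(x,s_j)$, and $k-1$ incidences along $s_j$ (for a single fixed $j$) through a shared vertex $y\in s\cap s_j$ — but $s\cap s_j=\{x\}$ again forbids this. Given these repeated obstructions, the honest plan is: \textbf{(1)} upper bound from Theorem \ref{ccxx}; \textbf{(2)} lower bound by selecting a vertex $v$ of degree $\Delta$ lying on an edge $e$, then among the $k$ incidences on $e$ and $\Delta$ incidences at $v$, identify a vertex $w\in e$, $w\neq v$, of degree $\geq 2$ (exists unless $\mathcal{H}$ is a ``star'' of edges at $v$, a trivial case handled separately where the count still gives $\Delta+k-1$), take edge $f\ni w$, $f\neq e$; then the incidences $\{(v,s):s\ni v\}\cup\{(w,e)\}\cup\{(w,f)\}$ — wait, is $(w,f)$ adjacent to $(v,s)$ for $s\neq e$? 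Only if $v,w\in f$ or $v,w\in s$; by linearity no. So the bound $\Delta+k-1$ must come from the \emph{edge} $e$ contributing $k$ colors and the \emph{vertex} $v$ contributing $\Delta$, overlapping in exactly one (the color of $(v,e)$), giving $k+\Delta-1$ — but this requires showing the $k-1$ ``other'' colors on $e$ and the $\Delta-1$ ``other'' colors at $v$ must be distinct, which is false in general by the above. So I conclude the lower bound is subtle and the main obstacle; I would attempt it by a minimal-counterexample/Levi-forest induction showing that in the extremal tree-like instance, after the permutation step of Lemma \ref{lem:permution}, the final edge $r\ell$ sees $|\varphi[\ell]\cup\varphi[u_1]|=\Delta+k-2$ colors forced distinct, hence $\Delta+k-1$ are necessary; equivalently, exhibit explicitly a $k$-uniform $\alpha$-acyclic linear hypergraph (a ``broom'': one central edge $e=\{v_1,\dots,v_k\}$, with $\Delta-1$ extra edges at $v_1$ and one extra edge at $v_2$, arranged to be $\Delta$-regular-ish near $v_1$) and verify by hand its incidence chromatic number equals $\Delta+k-1$, which pins down sharpness.
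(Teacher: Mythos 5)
Your upper bound step is fine: Theorem \ref{ccxx} with $\Delta=\Delta(\mathcal{H})$ and $r(\mathcal{H})=k$ gives $\chii{\mathcal{H}}\leq \Delta(\mathcal{H})+k-1$ exactly as the paper does. But the lower bound, which you correctly identify as the remaining task, never gets proved in your proposal, and the reason is a misreading of the adjacency rule. You repeatedly assert that $(x,s_j)$ (for $j\geq 2$) and $(z,s_1)$ (for $z\in s_1$, $z\neq x$) are \emph{not} adjacent because $z\notin s_j$ by linearity. That checks only one disjunct of condition (ii). The definition says $(x,s)$ and $(x',s')$ are adjacent if $\{x,x'\}\subseteq s$ \emph{or} $\{x,x'\}\subseteq s'$, where $s$ and $s'$ are the two edges of the two incidences — and here $s_1$ is one of those two edges. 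Since $x\in s_1$ (it is an edge through $x$) and $z\in s_1$, we have $\{x,z\}\subseteq s_1$, so the two incidences \emph{are} adjacent. Linearity is irrelevant to this pair.

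Once this is corrected, the entire "obstruction" you built your proposal around disappears, and the argument is the one-line clique count you dismissed: for a vertex $x_0$ of degree $\Delta(\mathcal{H})$ with incident edges $s_1,\ldots,s_{\Delta(\mathcal{H})}$ and $s_1=\{x_0,x_1,\ldots,x_{k-1}\}$, the $\Delta(\mathcal{H})+k-1$ incidences $(x_0,s_1),\ldots,(x_0,s_{\Delta(\mathcal{H})}),(x_1,s_1),\ldots,(x_{k-1},s_1)$ are pairwise adjacent — the $(x_0,s_j)$ among themselves by (i), the $(x_i,s_1)$ among themselves by (ii) via $s_1$, and the cross pairs by (ii) via $s_1$ as just explained — so they need pairwise distinct colors and $\chii{\mathcal{H}}\geq \Delta(\mathcal{H})+k-1$. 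This is precisely the paper's proof; neither $\alpha$-acyclicity nor the Levi forest, nor the minimal-counterexample and "broom" constructions you sketch at the end (which in any case you leave unfinished), is needed for the lower bound. As written, your proposal does not constitute a proof because its central claimed obstacle is illusory and the workarounds are never carried out.
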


\begin{proof}
There is nothing to be proved if $k=1$, so we assume $k\geq 2$.
Let $x_0$ be a vertex of $\mathcal{H}$ such that $\D_{\mathcal{H}}(x_0)=\Delta(\mathcal{H})$. Let $s_1,\ldots,s_{\Delta(\mathcal{H})}$ be the edges incident with $x_0$ and let  $s_1=\{x_0,x_1,x_2,\ldots,x_{k-1}\}$. Since the $\Delta(\mathcal{H})+k-1$ incidences $(x_0,s_1),\ldots,(x_0,s_{\Delta(\mathcal{H})}),$ $(x_1,s_1),\ldots,(x_{k-1},s_1)$ are pairwise adjacent, they cannot be colored the same. This  implies $\chii{\mathcal{H}}\geq\Delta(\mathcal{H})+k-1$
and thus the equality holds by Theorem \ref{ccxx}.
\end{proof}

\bibliographystyle{abbrv}
\bibliography{ref}

\end{document}